\newtheorem{thm}{Theorem}[section]
\newtheorem{prop}[thm]{Proposition}
\newtheorem{lem}[thm]{Lemma}
\newtheorem{cor}[thm]{Corollary}
\theoremstyle{definition}
\newtheorem{eg}[thm]{Example}
\theoremstyle{remark}
\renewcommand{\emptyset}{\varnothing}
\renewcommand{\P}{\mathcal P}
\renewcommand{\phi}{\varphi}
\newcommand{\im}{\mathop{\mathrm{Im}}\nolimits}
\renewcommand{\hat}{\protect\widehat}
\renewcommand{\i}[1]{\mathfrak{#1}}
\newcommand{\m}{\i{m}}
\newcommand{\A}{\mathcal A}
\newcommand{\E}{\mathcal E}
\renewcommand{\L}{\mathcal L}
\renewcommand{\H}{\mathcal{H}}
\newcommand{\reg}{\mathop{\mathrm{reg}}\nolimits}
\newcommand{\pd}{\mathop{\mathrm{pd}}\nolimits}
\newcommand{\lcm}{\mathop{\mathrm{lcm}}\nolimits}
\renewcommand{\*}{\bullet}
\newcommand{\lto}{\mathop{\longrightarrow\,}\limits}
\newcommand{\xto}[1]{\overset{#1}{\lto}}
\title{Hypergraphs and Regularity of Square-free Monomial Ideals}
\author{Kuei-Nuan Lin}
\address{University of California, Riverside, Department of Mathematics, 900 University Ave. Riverside, CA, 92521}
\email{knlin@math.ucr.edu}
\author{Jason McCullough}
\address{Rider University, Department of Mathematics, 2083 Lawrenceville Road,
Lawrenceville, NJ 08648}
\email{jmccullough@rider.edu}
\subjclass[2010]{Primary: 13D02; Secondary: 05E40}
\begin{document}

\begin{abstract}We define a new combinatorial object, which we call a labeled hypergraph, uniquely associated to any square-free monomial ideal.  We prove several upper bounds on the regularity of a square-free monomial ideal in terms of simple combinatorial properties of its labeled hypergraph.  We also give specific formulas for the regularity of square-free monomial ideals with certain labeled hypergraphs.  Furthermore, we prove results in the case of one-dimensional labeled hypergraphs.
\end{abstract}

\maketitle

\section{Introduction}

Resolutions and invariants of square-free monomial ideals continue to be an active area of research.  It is well-known that given a homogeneous ideal, the regularity of its initial ideal with respect to any term order is an upper bound of the regularity of the ideal (cf. \cite[Theorem 22.9]{Peeva}). Moreover, the polarization of a monomial ideal has the same regularity as the original ideal and is square-free (cf. \cite[Theorem 21.10]{Peeva}). Therefore, understanding the regularity of square-free monomial ideals becomes extremely important.  Much of the existing literature on the subject involves associating to a square-free monomial ideal some combinatorial object.  The theory of simplicial complexes and Stanley-Reisner ideals is one example.  The theory of edge ideals of graphs is another.  Many authors have studied the regularity and Betti numbers of edge ideals of graphs, e.g.   \cite{DHS}, \cite{Katzman}, \cite{Kummini}, \cite{VanTuyl}, \cite{Woodroofe}.  Other authors have studied higher degree generalizations using hypergraphs and clutters \cite{Emtander}, \cite{HVT}, \cite{MKM} or simplicial complexes \cite{HVT2}, \cite{Terai} \cite{Zheng}.  See \cite{MV} for a nice survey of results on edge ideals of graphs and clutters.

 In this paper we focus on the regularity of arbitrary square-free monomial ideals.   We define a new combinatorial object associated to any square-free monomial ideal, which we call a \textit{labeled hypergraph}.  See Section~\ref{Slh} for a precise definition.  The unlabeled version of this construction was first used by Kimura et. al. \cite{KTY1} to study the arithmetical rank of square-free monomial ideals.  (See also \cite{KTY2}.)  By studying simple combinatorial properties of the labeled hypergraph associated to any square-free monomial ideal, we obtain bounds on the regularity of certain square-free monomial ideals in terms of the number of vertices and edges (Theorem~\ref{Tiso}, Proposition~\ref{Pmatch}).   In Theorem~\ref{Pfill}, we give our most general upper bound that applies to any square-free monomial ideal.  

In general, the regularity of a square-free monomial ideal may depend on the characteristic of the base field $K$ (cf. \cite[Section 5.3]{BH}), even in the case of the edge ideal of a simple graph \cite{Katzman}.  However, we give classes of labeled hypergraphs $\H$ whose associated square-free monomial ideal has regularity determined by the combinatorial properties of $\H$ and hence are independent of the base field (Theorem~\ref{Tsimple}, Corollary~\ref{Tmatch}).  These results show that the upper bounds mentioned above are tight for large classes of ideals.  However, even when the bounds are not tight, these results provide a very quick method to compute a rough upper bound for any square-free monomial ideal.

We note that by studying labeled hypergraphs of square-free monomial ideals, we do not impose restrictions on the degrees of the monomial generators, as opposed to the study of edge ideals of graphs, which are limited to monomial ideals generated in degree two.  Even most results regarding regularity of monomial ideals associated to hypergraphs or clutters requires some assumptions on the degrees of the generators or some other uniformity property.  We compare our bounds to two new general results of Dao-Schweig \cite{DS} and Ha-Woodroofe \cite{HW}.  We show that our results are incomparable to both.  See Example~\ref{egcompare}.

The rest of the paper is structured as follows: in Section~\ref{Sb}, we fix notation and summarize the relevant background results and terminology.  In Section~\ref{Slh}, we define the notion of a labeled hypergraph along with several properties of labeled hypergraphs that will be relevant in subsequent results.  Section~\ref{Smr} contains general upper bounds and more specific formulas for regularity of square-free monomial ideals in terms of labeled hypergraph data.  Finally, we focus on the case of one-dimensional labeled hypergraphs in Section~\ref{S1d}.

\section{Background and Notation}\label{Sb}

For the remainder of the paper, let $\A$ be a finite set of symbols, which will double as variables.  Let $R = K[\A]$ be a polynomial ring over a field $K$.  We give $R$ a standard graded structure, where all variables have degree one.  We write $R_i$ for the $K$-vector space of homogenous degree $d$ forms in $R$ so that $R = \bigoplus_{i \ge 0} R_i$.  We set $\m = \bigoplus_{i \ge 1} R_i$ to be the unique graded maximal ideal.  We use the notation $R(-d)$ to denote a rank-one free module with generator in degree $d$ so that $R(-d)_i = R_{i-d}$.  

Let $M$ be a finitely generated graded $R$-module.  We can compute the minimal graded free resolution of $M$: \[ 0 \to \bigoplus_{j} R(-j)^{\beta_{pj}(M)} \rightarrow \cdots \rightarrow \bigoplus_{j} R(-j)^{\beta_{2j}(M)} \rightarrow \bigoplus_{j} R(-j)^{\beta_{1j}(M)} \rightarrow \bigoplus_{j} R(-j)^{\beta_{0j}(M)},\]
  The minimal graded free resolution of $M$ is unique up to isomorphism.  Hence, the numbers $\beta_{ij}(M)$, called the \textit{graded Betti numbers} of $M$, are invariants of $M$.  Two coarser invariants measuring the complexity of this resolution are the \textit{projective dimension} of $M$, denoted $\pd(M)$, and the \textit{regularity} of $M$, denoted $\reg(M)$.  These can be defined as
\[\pd(M) = \max\{i\,:\,\beta_{ij}(M) \neq 0 \text{ for some } j\} \text{ and } \reg(M) = \max\{j - i\,:\,\beta_{ij}(M) \neq 0 \text{ for some } i\}.\]

We define labeled hypergraphs in Section~\ref{Slh} and use combinatorial properties to bound the regularity of square-free monomial ideals without assumptions on the the ideal itself.  Most of our proofs are inductive and we use the following standard facts about regularity often.  

\begin{lem} \label{Lreg} Let $I \subset R$ be an ideal and let $z \in R_d$.  Then
\begin{enumerate}
\item $\reg\left(\frac{R}{I}\right) \le \max\left\{\reg\left(\frac{R}{I:z}\right) + d, \reg\left(\frac{R}{(I,z)}\right)\right\}$
\item $\reg\left(\frac{R}{I:z}\right) \le \max\left\{\reg\left(\frac{R}{I}\right) - d, \reg\left(\frac{R}{(I,z)}\right) - d + 1\right\}$
\item $\reg\left(\frac{R}{(I,z)}\right) \le \max\left\{\reg\left(\frac{R}{I}\right), \reg\left(\frac{R}{I:z}\right) + d - 1\right\}$
\end{enumerate}
\end{lem}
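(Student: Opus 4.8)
The plan is to prove all three inequalities from a single source: the short exact sequence of graded $R$-modules
\[
0 \to \frac{R}{I:z}(-d) \xto{\cdot z} \frac{R}{I} \to \frac{R}{(I,z)} \to 0,
\]
where the first map is multiplication by $z$ (well-defined and injective because $I:z$ is exactly the annihilator of $z$ modulo $I$), and the cokernel is visibly $R/(I,z)$. The standard behavior of regularity along a short exact sequence $0 \to A \to B \to C \to 0$ gives three inequalities: $\reg(B) \le \max\{\reg(A),\reg(C)\}$, $\reg(A) \le \max\{\reg(B),\reg(C)+1\}$, and $\reg(C) \le \max\{\reg(A)-1,\reg(B)\}$. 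I would cite these from a standard reference (e.g. the corresponding statements in \cite{Peeva}, or derive them quickly from the long exact sequence in local cohomology, where $\reg$ is read off as $\max_i\{ \operatorname{end}(H^i_{\m}(-)) + i\}$, together with the fact that $H^i_{\m}$ is a functor fitting into a long exact sequence).

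Next I would substitute $A = (R/(I:z))(-d)$, $B = R/I$, $C = R/(I,z)$ and track the degree shift: $\reg\big((R/(I:z))(-d)\big) = \reg(R/(I:z)) + d$. Feeding this into the three generic inequalities:

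\begin{enumerate}
\item From $\reg(B) \le \max\{\reg(A),\reg(C)\}$: $\reg(R/I) \le \max\{\reg(R/(I:z))+d,\ \reg(R/(I,z))\}$, which is (1).
\item From $\reg(A) \le \max\{\reg(B),\reg(C)+1\}$: $\reg(R/(I:z))+d \le \max\{\reg(R/I),\ \reg(R/(I,z))+1\}$; subtracting $d$ gives (2).
\item From $\reg(C) \le \max\{\reg(A)-1,\reg(B)\}$: $\reg(R/(I,z)) \le \max\{\reg(R/(I:z))+d-1,\ \reg(R/I)\}$, which is (3).
\end{enumerate}

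The only genuine content is verifying the exact sequence and the generic regularity inequalities; everything else is bookkeeping of the degree shift. I do not expect a real obstacle here — the main point to be careful about is the direction of each inequality in the short exact sequence (it is easy to mis-state which of $A$, $C$ gets the $+1$), and the placement of the shift $(-d)$ on the correct term. One should also note the degenerate cases where one of $I:z$, $(I,z)$ equals the unit ideal or the whole ring (so the corresponding quotient is zero and its regularity is conventionally $-\infty$); the inequalities remain valid with this convention, and in fact if $z \in I$ then $I:z = R$ and the first module vanishes, while if $z$ is a nonzerodivisor mod $I$ then $I:z = I$ and one recovers the familiar statements for hyperplane-section-like arguments. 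Since all our later applications take $I$ a square-free monomial ideal and $z$ a variable or a squarefree monomial, these edge cases are easy to handle inline.
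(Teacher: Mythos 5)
Your proposal is correct and follows essentially the same route as the paper: the authors also apply the standard regularity inequalities for a short exact sequence (citing Corollary 20.19 of Eisenbud) to $0 \to \frac{R}{I:z}(-d) \xto{z} \frac{R}{I} \to \frac{R}{(I,z)} \to 0$ and track the shift $\reg\left(\frac{R}{I:z}(-d)\right) = \reg\left(\frac{R}{I:z}\right) + d$. Your additional remarks on the degenerate cases and on which term receives the $+1$ are accurate but not needed beyond what the cited corollary already provides.
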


\begin{proof} Apply Corollary 20.19 in \cite{Eisenbud} to the short exact sequence
\[0 \to \frac{R}{I:z}\left(- d\right) \xto{z} \frac{R}{I} \to \frac{R}{(I,z)} \to 0.\]
\end{proof}

The following is an immediate consequence.

\begin{cor} \label{Creg} Using the notation in Lemma~\ref{Lreg}, we have
\begin{enumerate}
\item If $\reg\left(\frac{R}{I:z}\right) + d \ge \reg\left(\frac{R}{(I,z)}\right) + 2$, then $\reg\left(\frac{R}{I}\right) = \reg\left(\frac{R}{I:z}\right) + d$.
\item If $\reg\left(\frac{R}{I:z}\right) + d \le \reg\left(\frac{R}{(I,z)}\right)$, then $\reg\left(\frac{R}{I}\right) = \reg\left(\frac{R}{(I,z)}\right)$.
\end{enumerate}
\end{cor}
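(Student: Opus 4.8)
The plan is to obtain both equalities directly from the three inequalities recorded in Lemma~\ref{Lreg}, using the stated hypothesis in each case to force one term of a relevant maximum to dominate strictly. To lighten notation, I would set $a = \reg(R/(I:z))$, $b = \reg(R/(I,z))$, and $c = \reg(R/I)$, so that the two assertions to be proved read $c = a + d$ (part~(1)) and $c = b$ (part~(2)).

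For part~(1), assume $a + d \ge b + 2$. First I would invoke Lemma~\ref{Lreg}(1): $c \le \max\{a + d,\, b\}$. Since $a + d \ge b + 2 > b$, the maximum equals $a + d$, giving $c \le a + d$. For the reverse inequality I would invoke Lemma~\ref{Lreg}(2): $a \le \max\{c - d,\, b - d + 1\}$. The hypothesis gives $b - d + 1 \le (a + d - 2) - d + 1 = a - 1 < a$, so the maximum cannot be the second term; hence $a \le c - d$, i.e.\ $a + d \le c$. The two bounds together yield $c = a + d$.

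For part~(2), assume $a + d \le b$. Lemma~\ref{Lreg}(1) again gives $c \le \max\{a + d,\, b\} = b$. For the reverse inequality I would invoke Lemma~\ref{Lreg}(3): $b \le \max\{c,\, a + d - 1\}$; since $a + d - 1 \le b - 1 < b$, the maximum must be $c$, so $b \le c$, whence $c = b$.

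Since the corollary is a formal consequence of Lemma~\ref{Lreg}, there is no substantive obstacle; the only point requiring care is the bookkeeping with strict versus non-strict inequalities, namely checking in each application that the unwanted term of the maximum is \emph{strictly} smaller than the target value so that the desired term is forced.
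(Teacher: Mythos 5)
Your proof is correct: both parts follow exactly as you argue by playing the three inequalities of Lemma~\ref{Lreg} against each other, with the hypothesis in each case forcing the unwanted branch of the relevant maximum to be strictly too small. The paper itself states the corollary as an immediate consequence of Lemma~\ref{Lreg} and gives no further argument, so your write-up simply supplies the routine bookkeeping the authors omitted.
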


We recall the definition of the Taylor Resolution \cite{Taylor}.  Let $I$ be a  monomial ideal with minimal monomial generators $f_1,\ldots,f_\mu$.  We define a complex $T_\*$ as follows: Set $T_1$ to be a free $R$-module with basis $e_1,\ldots,e_\mu$.  Let $T_i = \bigwedge^i T_1$ so that for all $F = \{j_1 < j_2 < \cdots < j_i\} \subset [\mu] := \{1,2,\ldots,\mu\}$, the elements $e_F = e_{j_1} \wedge \cdots \wedge e_{j_i}$ form a basis of $T_i$.  Then the differential $\partial_i:T_i \to T_{i-1}$ is determined by setting
\[
\partial_i(e_F) = \sum_{k = 1}^i (-1)^k \frac{\lcm(f_F)}{\lcm(f_{F\smallsetminus\{j_k\}})} e_{F\smallsetminus\{j_k\}},
\]
where $F = \{j_1 < j_2 < \cdots < j_i\} \subset [\mu]$ and $\lcm(f_F) = \lcm(f_{j_1},\ldots,f_{j_i})$.
For any monomial ideal $I$, the Taylor resolution is a graded free resolution of $R/I$ that is usually not minimal.  We begin our study of regularity in Section~\ref{Smr} by considering square-free monomial ideals whose Taylor resolutions are minimal.  In general, one obtains from the Taylor resolution of $I = (f_1\ldots,f_\mu)$ that 
\[\reg(R/I) \le \max\{\deg(\lcm(f_{j_1},\ldots,f_{j_i})) - i:1 \le i \le \mu \text{ and } 1 \le j_1 < j_2 < \cdots < j_i \le \mu\}.\]

\section{Labeled Hypergraphs}\label{Slh}

In this section we fix the notation and relevant definitions for labeled hypergraphs and their connection to square-free monomial ideals.  Contrary to the usual construction of a hypergraph associated to a square-free monomial ideal, where generators of the ideal correspond to edges, the vertices of a labeled hypergraph in our construction correspond to the generators and the edges correspond to the variables via divisibility.  

Let $V$  be a finite set.  Use $\P(V)$ to denote the power set of $V$.  A \textit{labeled hypergraph} (or just \textit{hypergraph}) $\H$ on $V$ with alphabet $\A$ is a tuple $(V, X, E,  \E)$, where $E: \A \to \P(V)$ is a function, $X = \{a \in \A:E(a) \neq \emptyset\}$ and $\E = \im(E)$.  We call the elements of $\E$ \textit{edges} (or \textit{hyperedges}) and often write $E_a = E(a)$.  For $F \in \E$, we call elements $a \in \A$ with $E_a = F$ the \textit{labels} of $F$.  Note that once $V$ and $E$ are set, $X$ and $\E$ are completely determined; however, we include them for notational convenience.  The number $|X|$ counts the number of labels appearing in $\H$ while $|\E|$ counts the number of distinct edges.  In other words, $|X| = \sum_{F \in \E} n_F$, where $n_F = |\{a \in A:E_a = F\}|$.

Consider a labeled hypergraph $\H = (V, X, E,  \E)$.  If $F, G \in \E$, we say that $F$ is a \textit{subface} of $G$ if $F \subseteq G$.  We use $|F|$ to denote the number of vertices in $F$ and call this number the \textit{size} of $F$.  We define the \textit{dimension of} $\H$ to be $\dim(\H) = \max\{|F| - 1 : F \in \E\}$.  We say that a vertex $v \in V$ is \textit{closed} if $\{v\} \in \E$; otherwise, we say $v$ is \textit{open}.   For any $v \in V$, we define the \textit{neighbors} of $v$ in $\H$ to be the set
\[N_{\H}(v) = \{w \in V:w \neq v \text{ and }\exists F \in \E \text{ such that } v, w \in F\}.\]
We say that  $\H$ has \textit{isolated open vertices} if for every open vertex $v$ and every $w \in N_\H(v)$, $w$ is closed.  An edge $F \in \E$ of $\H$ is called $\textit{simple}$ if $|F| \ge 2$ and $F$ has no proper subedges besides $\emptyset$.  We say that $\H$ has \textit{isolated simple edges} if every open vertex is contained in exactly one simple edge.  

Now let $I \subset R = K[\A]$ be a square-free monomial ideal with minimal monomial generating set $\{f_1,\ldots,f_\mu\}$.  The \textit{labeled hypergraph of} $I$ is the labeled hypergraph $\H(I) = (V, X, E,  \E)$, where $V = [\mu]$ and $E:\A \to \P([\mu])$ is defined by $E_a  = \{j:a \text{ divides } f_j\}$.  As above, $X = \{a \in \A:E_a\neq \emptyset\}$ and $\E = \im(E)$.  Note that $|X|$ counts the number of variables appearing in the minimal generators of $I$ and also counts the number of edges of $\H(I)$, with multiplicity.

\begin{eg}\label{eg1} Let $R=k[a,b,\ldots,y,z]$ be a polynomial ring over a field $k$.
Let $I=(f_{1},f_{2},f_{3},f_{4})$ a square-free monomial ideal where
$f_{1}=efh$, $f_{2}=aefgij$, $f_{3}=bchij$, $f_{4}=dghij$. The labeled hypergraph $\H(I) = (V, X, E,  \E)$, shown below in Figure~\ref{F1}, is defined by setting $V=[4]$,
$\A = \{a,b,\ldots,y,z\}$, $X =\{a,b,c,d,e,f,g,h,i,j\}$, 
$\mathcal{E}=\{\{2\},\{3\},\{4\},\{1,2\},\{2,4\},\{1,3,4\},\{2,3,4\}\}$, and
$E_{a}=\{2\}, E_{b}=E_c=\{3\}$, $E_{d}=\{4\}$,  $E_{e}=E_{f}=\{1,2\}$,
$E_{g}=\{2,4\}$, $E_{h}=\{1,3,4\}$ and $E_{i}=E_{j}=\{2,3,4\}$.  We label the vertices $1, 2, 3, 4$ in Figure~\ref{F1} for clarity.  For the rest of this paper, we omit such vertex labels.
\end{eg}

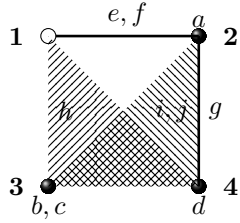
\begin{figure}[h]
\caption{$\H((efh,aefgij,bchij,dghij))$}\label{F1}
\begin{tikzpicture}
\usetikzlibrary{patterns}
\draw  [shape=circle] (-1,1) circle (.1) ;
\shade [shading=ball, ball color=black] (-1,-1) circle (.1) node [below] {$b,c$} ; 
\shade [shading=ball, ball color=black] (1,1) circle (.1) node [above ]  { $a$}  ;
\shade [shading=ball, ball color=black] (1,-1) circle (.1) node [below ] { $d$}  ;
\draw [line width=1pt  ] (-.9,1)--(1,1)  node [pos=.5, above ] { $e,f$ }; 
\draw [line width=1pt  ] (1,1)--(1,-1)  node [pos=.5, right ] { $g$ }  node [pos=.5, left ] { $i,j$ }; 
\path (-1,1)--(-2,1) node [pos=.2,left]{$\mathbf{1}$};
\path (-1,-1)--(-2,-1) node [pos=.2,left]{$\mathbf{3}$};
\path (1,-1)--(2,-1) node [pos=.2,right]{$\mathbf{4}$};
\path (1,1)--(2,1) node [pos=.2,right]{$\mathbf{2}$};
\path  (-1,1)--(-1,-1)  node [pos=.5, right ]{$h$};
\path [pattern=north east lines]   (-1,1)--(-1,-1)--(1,-1)--cycle;
\path [pattern=north west lines]     (1,1)--(1,-1)--(-1,-1)--cycle;
\end{tikzpicture}
\end{figure}

In \cite{KTY1}, Kimura et. al. introduced the unlabeled version of a hypergraph associated to a square-free monomial ideal where the vertices corresponded to minimal generators and edges corresponded to the variables of the ambient polynomial ring.  We note that this object is not detailed enough to study the regularity of square-free monomial ideals, since ideals with different regularity can have the same unlabeled hypergraph.  For example, the ideals $I = (ac,bc)$ and $J = (acd,bcd)$ both have hypergraphs on vertex set $V = \{1,2\}$ with edge set $\E = \{\{1\},\{2\},\{1,2\}\}$.  However, $\reg(R/I) = 1$ and $\reg(R/J) = 2$.  Since we will only talk about labeled hypergraphs for the remainder of the paper, we typically drop the word``labeled'' for brevity.

Following Kimura et. al \cite{KTY1}, we say that a hypergraph $\H = (V, X, E,  \E)$ is \textit{separated} if for every pair of vertices $v, w \in V$, there exist edges $F, G \in \E$ such that $v \in F \smallsetminus G$ and $w \in G \smallsetminus F$.  We note that $\H(I)$ is always a separated hypergraph for any square-free monomial ideal $I$, for if $\H(I)$ is not separated, one checks that the generators corresponding to the vertices of $\H(I)$ are not minimal.

Let $\H = (V, X, E,  \E)$ be a separated labeled hypergraph.  We define the square-free monomial associated to $\H$, denoted $I_\H$, as
the ideal generated by $\left\{\prod_{a \in \L_v} a \right\}_{v \in V}$ in the polynomial ring $K[\A]$, where $\L_v = \{a \in \A:v \in E_a\}$.\\

It is clear that $I_{\H(I)} = I$ for any square-free monomial ideal, and for any separated hypergraph, $\H(I_\H) = \H$, up to a permutation of the vertices.  We summarize this statement in the following proposition.

\begin{prop} There is a one-to-one correspondence
\begin{eqnarray*}
\left\{\parbox{1.5in}{Square-free monomials ideals}\right\} &\longleftrightarrow& \left\{\parbox{1.5in}{separated labeled hypergraphs up to vertex permutation}\right\}\\
I &\mapsto& \H(I)\\
I_\H &\mapsfrom& \H
\end{eqnarray*}
\end{prop}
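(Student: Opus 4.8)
The plan is to verify the two composite maps are identities, which amounts to unwinding the definitions given just above the statement. First I would show that $I_{\H(I)} = I$ for any square-free monomial ideal $I$. Write $I = (f_1,\ldots,f_\mu)$ with its minimal generating set, so $\H(I)$ has vertex set $V = [\mu]$ and $E_a = \{j : a \mid f_j\}$. By definition, $I_{\H(I)}$ is generated by the monomials $\prod_{a \in \L_v} a$ for $v \in V$, where $\L_v = \{a \in \A : v \in E_a\}$. Unwinding, $v = j$ lies in $E_a$ exactly when $a \mid f_j$, so $\L_j = \{a : a \mid f_j\}$, and since $f_j$ is square-free, $\prod_{a \in \L_j} a = f_j$. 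Hence the generating set of $I_{\H(I)}$ is exactly $\{f_1,\ldots,f_\mu\}$, giving $I_{\H(I)} = I$.

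Next I would show $\H(I_\H) = \H$ up to vertex permutation, for any separated labeled hypergraph $\H = (V,X,E,\E)$. Since the statement only concerns hypergraphs of the form $\H(I)$ — equivalently separated ones — we may assume $\H$ is separated; this is the hypothesis under which $I_\H$ was defined. Label $V = \{v_1,\ldots,v_\mu\}$ and set $f_j = \prod_{a \in \L_{v_j}} a$ where $\L_{v_j} = \{a : v_j \in E_a\}$, so $I_\H = (f_1,\ldots,f_\mu)$. To compute $\H(I_\H)$ I must check two things: that $\{f_1,\ldots,f_\mu\}$ is in fact the \emph{minimal} generating set of $I_\H$ (so that the vertex set of $\H(I_\H)$ is genuinely in bijection with $V$), and that the edge function matches. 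For minimality, suppose $f_j \mid f_k$ for some $j \neq k$; then $\L_{v_j} \subseteq \L_{v_k}$, so every edge containing $v_j$ contains $v_k$, i.e. there is no edge $F$ with $v_j \in F \smallsetminus G$ forcing a separating pair — more precisely this contradicts separatedness applied to the pair $v_j, v_k$, since we would need an edge containing $v_j$ but not $v_k$. Hence no $f_j$ divides another and the $f_j$ are the minimal generators. For the edge function: under the identification $j \leftrightarrow v_j$, the labeled hypergraph $\H(I_\H)$ has $E'_a = \{j : a \mid f_j\} = \{j : a \in \L_{v_j}\} = \{j : v_j \in E_a\}$, which corresponds precisely to $E_a$ under $j \leftrightarrow v_j$. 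Thus $\H(I_\H) = \H$ up to the chosen labeling of vertices, i.e. up to vertex permutation.

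Finally I would assemble these two verifications into the bijection statement: the map $I \mapsto \H(I)$ lands in separated labeled hypergraphs (as already observed in the text, a non-separated hypergraph would force the corresponding generators to be non-minimal), the map $\H \mapsto I_\H$ is defined on separated hypergraphs, and the two computations above show the composites are the identity on each side (the latter only up to vertex permutation, which is why the right-hand set is taken modulo vertex permutation). I expect the main obstacle — really the only subtle point — to be the minimality argument in the second direction: one must use separatedness in exactly the right way to rule out divisibility relations among the $f_j$, and to handle the degenerate possibilities $\L_{v_j} = \emptyset$ or $\L_{v_j} = \L_{v_k}$, both of which separatedness excludes (the former would make the corresponding generator equal to $1$, the latter two equal generators). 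Everything else is bookkeeping with the definitions of $E$, $\L_v$, and square-freeness.
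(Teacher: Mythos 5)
Your proposal is correct and follows the same route as the paper, which simply asserts that $I_{\H(I)} = I$ and $\H(I_\H) = \H$ (up to vertex permutation) are clear from the definitions; you have filled in the only nontrivial detail, namely that separatedness forces the monomials $\prod_{a \in \L_{v}} a$ to be a minimal generating set. No gaps.
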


We say that a labeled hypergraph $\H = (V, X, E,  \E)$ is \textit{saturated} if for all $v \in V$, $\{v\} \in \E$.  Note that if $I$ is a square-free monomial ideal, $\H(I)$ is saturated if and only if every minimal generator contains at least one variable not dividing any other generator.  We show in the next section that square-free monomial ideals with saturated hypergraphs have a combinatorial formula for their regularity.

\begin{eg}\label{eg2} Let $R = K[a,b,\ldots,y,z]$ and $I = (efhk,aefgij,bchij,dghij)$.  Then $\H(I)$ is saturated, as seen in Figure~\ref{F2}.  In contrast, the ideal from Example~\ref{eg1} is not saturated since $\{1\}$ is not an edge of the hypergraph.
\end{eg}

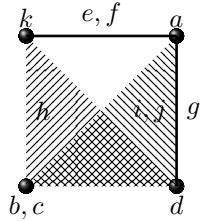
\begin{figure}[h]
\caption{$\H((efhk,aefgij,bchij,dghij))$}\label{F2}
\begin{tikzpicture}
\usetikzlibrary{patterns}
\shade [shading=ball, ball color=black] (-1,1) circle (.1)  node [above ]  { $k$};
\shade [shading=ball, ball color=black] (-1,-1) circle (.1) node [below] {$b,c$}; ;
\shade [shading=ball, ball color=black] (1,1) circle (.1) node [above ]  { $a$};
\shade [shading=ball, ball color=black] (1,-1) circle (.1) node [below ] { $d$};
\draw [line width=1pt  ] (-1,1)--(1,1)  node [pos=.5, above ] { $e,f$ }; 
\draw [line width=1pt  ] (1,1)--(1,-1)  node [pos=.5, right ] { $g$ }  node [pos=.5, left ] { $i,j$ }; 
\path  (-1,1)--(-1,-1)  node [pos=.5, right ]{$h$};
\path [pattern=north east lines]   (-1,1)--(-1,-1)--(1,-1)--cycle;
\path [pattern=north west lines]  (1,1)--(1,-1)--(-1,-1)--cycle;
\end{tikzpicture}
\end{figure}

\section{Main Results}\label{Smr}

Unless otherwise stated, we set $R = K[\A]$.  Let $I \subset R$ be a square-free monomial ideal with minimal monomial generators $f_1,\ldots,f_\mu$.  If $I$ is a complete intersection, then the Koszul complex $K_\*(f_1,\ldots,f_\mu)$ forms a minimal free resolution of $R/I$.  It is easy to see that $\reg(R/I) = \left(\sum_{i = 1}^\mu \deg(f_i)\right) - \mu = n - \mu$, where $n$ is the number of variables appearing among $f_1,\ldots,f_\mu$.  We first
show that ideals with saturated hypergraphs also satisfy this latter formula.

\begin{prop} \label{Psat} Let $I = (f_1,\ldots,f_\mu) \subset R$ be a square-free monomial ideal and let  $\H = \H(I) = (V, X, E,  \E)$.  Then the following are equivalent:
\begin{enumerate}
\item $\H$ is saturated.
\item The Taylor resolution of $R/I$ is minimal.
\end{enumerate}
In this case, we have
\[\reg(R/I) = |X| - |V| \text{ and } \pd(R/I) = |V|.\]
\end{prop}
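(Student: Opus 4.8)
The plan is to prove the equivalence $(1)\Leftrightarrow(2)$ first, and then extract the formulas for $\reg$ and $\pd$ as a consequence of the minimality of the Taylor resolution.

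For the implication $(2)\Rightarrow(1)$, I would argue by contraposition. Suppose $\H$ is not saturated, so there is a vertex $v$ with $\{v\}\notin\E$. Concretely, this means the generator $f_v$ contains no variable that divides no other generator; equivalently, for each variable $a \mid f_v$ there is some other vertex $w \neq v$ with $a \mid f_w$. I claim this forces a cancellation in the Taylor complex. The key computation is to look at the component of the Taylor differential $\partial_2(e_{\{v,w\}})$ landing on $e_{\{v\}}$, which has coefficient $\lcm(f_v,f_w)/f_v$; minimality of the Taylor resolution is equivalent to the statement that $\lcm(f_F) \neq \lcm(f_{F \setminus \{j\}})$ for every $F$ and every $j \in F$ (this is the standard criterion for minimality of the Taylor resolution, e.g. via the fact that the Taylor complex is supported on the lcm-lattice and minimal iff all lattice intervals of length one are proper). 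The condition that $\{v\}\notin\E$ says precisely that $\supp(f_v) \subseteq \bigcup_{w\neq v}\supp(f_w)$, and I would use this, together with the combinatorics of how labels sit on edges, to exhibit a subset $F \ni v$ with $\lcm(f_F) = \lcm(f_{F\setminus\{v\}})$ — take $F = \{v\}\cup\{w_a : a \in \L_v\}$ where $w_a \neq v$ is chosen with $a \mid f_{w_a}$; then every variable of $f_v$ already appears in some $f_{w_a}$, so removing $v$ does not change the lcm. Hence the Taylor resolution is not minimal. The implication $(1)\Rightarrow(2)$ runs the same criterion in the other direction: assuming $\{v\}\in\E$ for every $v\in V$, I must show $\lcm(f_F)\neq\lcm(f_{F\setminus\{j\}})$ for every $F\subseteq[\mu]$ and $j\in F$. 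Since $\{j\}\in\E=\im(E)$, there is a variable $a$ with $E_a = \{j\}$, i.e. $a \mid f_j$ but $a \nmid f_k$ for all $k \neq j$; then $a$ divides $\lcm(f_F)$ but not $\lcm(f_{F\setminus\{j\}})$, so the two lcms differ. This establishes $(1)\Leftrightarrow(2)$.

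For the formulas, assume the equivalent conditions hold, so the Taylor resolution is the minimal free resolution of $R/I$. The Taylor resolution has length $\mu = |V|$ (there is exactly one basis element $e_{[\mu]}$ in homological degree $\mu$ and it is nonzero in a minimal resolution), giving $\pd(R/I) = |V|$ immediately. For regularity, the minimal resolution places a generator in homological degree $i$ and internal degree $\deg(\lcm(f_F))$ for each $F\subseteq[\mu]$ with $|F| = i$, so $\reg(R/I) = \max\{\deg(\lcm(f_F)) - |F| : \emptyset \neq F \subseteq [\mu]\}$. I would then show this maximum is attained at $F = [\mu]$ and equals $|X| - |V|$. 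The point is that $\deg(\lcm(f_F)) = |\bigcup_{j\in F}\supp(f_j)| = |\{a \in \A : F \cap E_a \neq \emptyset\}|$, and passing from $F$ to $F\cup\{j\}$ increases this count by $|\{a : j \in E_a\} \setminus \{a : F\cap E_a\neq\emptyset\}|$, while the homological degree increases by exactly $1$. When $\{j\}\in\E$, there is a label $a$ with $E_a = \{j\}$ that is new, so the increment in $\deg(\lcm)$ is at least $1$; hence $\deg(\lcm(f_F)) - |F|$ is non-decreasing as we enlarge $F$, so the maximum is at $F = [\mu]$. Finally $\deg(\lcm(f_{[\mu]})) = |\{a \in \A : E_a \neq \emptyset\}| = |X|$, giving $\reg(R/I) = |X| - |V|$.

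I expect the main obstacle to be the careful bookkeeping in the non-saturated direction: namely verifying cleanly that $\supp(f_v)\subseteq\bigcup_{w\neq v}\supp(f_w)$ really does produce a set $F$ with a repeated lcm (one must make sure the chosen witnesses $w_a$ are distinct from $v$ but need not be distinct from each other, and that $F$ is genuinely a face of the construction rather than an artifact), and in making rigorous the standard fact that minimality of the Taylor complex is equivalent to strict inequality of lcms across every covering relation. Everything else is a direct unwinding of the definitions of $\L_v$, $E_a$, and $\lcm$.
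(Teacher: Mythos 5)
Your proposal is correct and follows essentially the same route as the paper: both characterize minimality of the Taylor resolution by the non-triviality of the lcm ratios appearing in the differential, identify that condition with every generator having a variable dividing no other generator (i.e.\ saturation), and then read off $\pd$ and $\reg$ from the Taylor complex. The only difference is one of detail --- the paper cites Fr\"oberg for the criterion (checking only the top face $[\mu]$) and simply asserts that $T_\mu = R(-|X|)$ determines the regularity, whereas you verify the covering-relation criterion directly and supply the monotonicity argument showing $\deg(\lcm(f_F)) - |F|$ is maximized at $F = [\mu]$, which is a worthwhile gap to fill.
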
 

\begin{proof} 
Observe that $\lcm(f_1,\ldots,f_\mu) \neq \lcm(f_1,\ldots,\hat{f_i},\ldots,f_\mu)$ for all $1 \le i \le \mu$, where $\hat{f_i}$ denotes that the $i$th element is removed if and only if the Taylor resolution is minimal. (See e.g. Proposition 1 in \cite{Froberg}.) Since $I$ is square-free, it follows that every minimal generator $f_i$ has a variable which does not divide any of the other generators.  Hence $\H(I)$ is saturated.  The converse is also clear.

Since the Taylor resolution has length $\mu$, we have $\pd(R/I) = \mu = |V|$.  Keeping track of the internal degrees of the free modules in the Taylor resolution, we see that $T_\mu = R(-d)$, where $d$ is the number of distinct variables appearing in $I$.  Thus $\reg(R/I) = d - \mu = |X| - |V|$.
\end{proof}

\begin{eg} Consider the ideal $I = (efhk,cefgij,abhij,dghij)$ from Example~\ref{eg2}, and it has a saturated hypergraph $\H = \H(I) = (V,X,E,\E)$. By Proposition~\ref{Psat}, we get that $\reg(R/I) = |X| - |V| = 11 - 4 = 7$.
\end{eg}

Note that as a corollary, we obtain that the regularity of a square-free monomial ideals with a saturated hypergraph does not depend on the characteristic of the base field.  We wish to investigate how far the regularity of a square-free monomial ideal strays from this formula when its hypergraph is not saturated.  Observe that the regularity may be larger, if the projective dimension is smaller than $|V|$, or it may be much smaller, when the generators share several variables in common.  We first show that the above formula is an upper bound for square-free monomial ideals whose hypergraphs have isolated open vertices.

\begin{lem}\label{Liso} Let $I  \subset R = K[\A]$ be a square-free monomial ideal with minimal monomial generating set $\{f_1,\ldots,f_\mu\}$, and set $\H = (V,X,E,\E)$.  Suppose $\H$ has exactly $n$ isolated open vertices.  Further suppose $1, \mu \in V$ are such that $1$ is an isolated open vertex, $\mu$ is a closed vertex, and $\mu \in N_{\H}(1)$.  Write $J = (f_1, \ldots, f_{\mu - 1})$ and set $z = f_\mu$.  Finally set $\H' = \H(J:z) = (X',V',E',\E)$ and $\H'' = \H(J) = (V'',X'',E'',\E)$.
Then
\begin{enumerate}
\item $\H'$ and $\H''$ have at most $n$ isolated open vertices.
\item Either $1 \in V'$ is closed (in which case $\H'$ has fewer than $n$ isolated open vertices) or $|N_{\H'}(1)| \le |N_{\H}(1)| - 1$.
\item Either $1 \in V''$ is closed (in which case $\H''$ has fewer than $n$ isolated open vertices) or $|N_{\H''}(1)| \le |N_{\H}(1)| - 1$.
\end{enumerate}
Moreover, in all cases we have
\[|X''| \le |X| - 1,\quad |V''| = |V| - 1 \text{ and } |X'| \le |X| - \deg(z) - |V| + |V'| + 1\]
\end{lem}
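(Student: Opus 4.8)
The plan is to unpack the definitions of $\H'=\H(J:z)$ and $\H''=\H(J)$ in terms of the monomial generators and track how vertices (generators) and edges (variables) change when we pass from $I$ to $J$ and $J:z$. The key structural input is that vertex $1$ is an \emph{isolated open} vertex of $\H$ whose only neighbors are closed, and that $\mu$ (with $z=f_\mu$) is one such closed neighbor; in particular $\gcd(f_1,z)$ is a nontrivial monomial, and every variable $a\mid\gcd(f_1,z)$ has $E_a\subseteq\{1,\mu\}$ with $\mu$ closed, which forces $E_a=\{1,\mu\}$ (since $a\mid z=f_\mu$), so actually $\gcd(f_1,z)$ consists of variables dividing exactly $f_1$ and $f_\mu$.

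First I would establish the statements about $\H''=\H(J)$. Since $J=(f_1,\dots,f_{\mu-1})$ and $z=f_\mu$ is a minimal generator of $I$, deleting $f_\mu$ removes exactly one vertex, so $|V''|=|V|-1$; it removes from the alphabet precisely those variables that divided $f_\mu$ and nothing else, i.e. the labels $a$ with $E_a=\{\mu\}$ — there is at least one such variable because $\mu$ is closed in $\H$ — hence $|X''|\le|X|-1$. For the isolated-open count and the neighbor bounds for vertex $1$: deleting $\mu$ can only shrink neighborhoods and can only turn open vertices closed (never the reverse, since removing a generator cannot create a new ``private'' variable unless it was already shared only with $f_\mu$), so $\H''$ has at most $n$ isolated open vertices, and if $1$ is still open in $\H''$ then $\mu\in N_\H(1)$ has disappeared from its neighborhood, giving $|N_{\H''}(1)|\le|N_\H(1)|-1$; if instead some variable that was shared only between $f_1$ and $f_\mu$ becomes a private variable of $f_1$, then $1$ becomes closed and the open count drops.

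Next I would handle $\H'=\H(J:z)$, which is the trickier half. The ideal $J:z=(f_1,\dots,f_{\mu-1}):z$ is generated (before taking a minimal set) by the monomials $f_i/\gcd(f_i,z)$, and the point is that $f_\mu\notin J$ forces no $f_i$ ($i<\mu$) to divide $z$, so each $f_i/\gcd(f_i,z)$ is a proper nonunit monomial and the generating set has at most $\mu-1$ elements; minimalizing can only reduce the vertex count, which is why $|V'|\le\mu-1$ and the bound on $|X'|$ is written with $|V'|$ rather than a fixed value. The degree count is the crux: the total number of variable-slots among the generators of $J$ (with multiplicity in the hypergraph sense, i.e. $|X''|$ counts distinct variables but we must be careful) — here I would instead argue directly on $|X'|$, the number of \emph{distinct} variables surviving in the minimal generators of $J:z$. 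Each variable appearing in some $f_i/\gcd(f_i,z)$ that survives minimalization contributes to $|X'|$; starting from the $|X|$ variables of $I$, we lose: the $\ge\deg(z)$ variables eaten away by dividing by $\gcd(f_i,z)$ across the relevant generators, plus the variables that vanish entirely when vertices are collapsed during minimalization, and we must add back a correction term accounting for overlap. The cleanest route is: write the generators of $J:z$, observe that quotienting by $z$ removes at least $\deg(z)$ variable-incidences and in particular kills at least the $\deg(z)$ variables of $z$ from vertex $1$'s ``column'' (note $\gcd(f_1,z)=$ all of $z$ here is \emph{not} assumed), then bound $|X'|$ by $|X|$ minus the number of variables lost to the quotient minus the number lost to collapsing $|V|-1-|V'|$... — matching this bookkeeping to the exact inequality $|X'|\le |X|-\deg(z)-|V|+|V'|+1$ by a careful incidence count is the main obstacle, and I expect it to come down to: the variables of $z$ all lie in columns of vertices in $N_\H[\mu]$, among which vertex $1$ is forced (the ``$+1$'' absorbs the single variable that might be shared between $f_1$ and $f_\mu$ and hence not lost), so peeling off $z$ costs $\deg(z)$, deleting $\mu$ and collapsing to $|V'|$ vertices costs another $|V|-|V'|$, minus the overcount of $1$. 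Once this incidence-counting identity is nailed down, assertions (1)--(3) for $\H'$ follow by the same monotonicity reasoning as for $\H''$: passing to a colon ideal and minimalizing can only merge open vertices into closed ones and only shrink the neighborhood of $1$ (which in any case loses its connection through $\mu$), so the isolated-open count does not increase and $|N_{\H'}(1)|\le|N_\H(1)|-1$ whenever $1$ stays open.
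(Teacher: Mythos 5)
Your treatment of $\H''$ (the counts $|V''|=|V|-1$, $|X''|\le|X|-1$ via the private label of the closed vertex $\mu$) and of statements (1)--(3) (monotonicity: neighborhoods only shrink, open vertices can only become closed, so openness and adjacency of open vertices cannot be created) is correct and is essentially the paper's argument. But the inequality $|X'|\le |X|-\deg(z)-|V|+|V'|+1$ is exactly the part you leave unfinished --- you call it ``the main obstacle'' --- and your sketch of how to close it is based on two incorrect readings. First, the preliminary claim that every variable dividing $\gcd(f_1,z)$ has $E_a=\{1,\mu\}$ does not follow: such a label could also divide some other (closed) generator, so only $\{1,\mu\}\subseteq E_a$ is forced. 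Second, the ``$+1$'' is not there to absorb a variable shared between $f_1$ and $f_\mu$ that ``is not lost'': \emph{every} variable of $z$ disappears from $J:z$, since $a\mid z$ implies $a\mid\gcd(f_i,z)$ whenever $a\mid f_i$, hence $a$ divides no $f_i'=f_i/\gcd(f_i,z)$.

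The missing idea is the following claim, which is the heart of the paper's proof: if $f_i'$ fails to be a minimal generator of $J:z$, then the vertex $i$ is \emph{closed} in $\H$. (Proof: if $i$ were open and $f_j'\mid f_i'$ for some $j\ne i$, then $i$ and $j$ are neighbors in $\H$, so $j$ is closed and carries a label $a$ with $E_a=\{j\}$; since $j\ne\mu$ we have $a\nmid z$, so $a\mid f_j'$ but $a\nmid f_i'$, contradicting $f_j'\mid f_i'$.) Each discarded vertex therefore carries a private variable not dividing $z$, and these variables also vanish from $J:z$; distinct discarded vertices contribute distinct such variables, disjoint from the $\deg(z)$ variables of $z$. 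Since exactly $(\mu-1)-|V'|=|V|-|V'|-1$ generators are discarded in passing from $f_1',\ldots,f_{\mu-1}'$ to a minimal generating set, one gets
\[|X'|\ \le\ |X|-\deg(z)-\bigl(|V|-|V'|-1\bigr)\ =\ |X|-\deg(z)-|V|+|V'|+1,\]
which is where the $+1$ actually comes from. Without the closedness claim, a discarded vertex need not carry any private variable, and the bookkeeping you propose does not yield the stated bound.
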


Before we proceed with the proof, it may help the reader to keep the following example in mind:  
Let $R=k[a,b,\ldots,y,z]$ and let $I=(f_{1},\ldots,f_{5})$, where $f_{1}=ef$, $f_{2}=fg$, $f_{3}=bce$, $f_{4}=ab$,
$f_{5}=ade$.  Using the notation above, we set $\mu = 5$, $J=(f_{1},\ldots,f_{4})$, and $J:f_{5}=J:ade=(f,b)$.  The hypergraphs $\H = \H(I), \H' = \H(J:z)$ and $\H'' = \H(J)$ are pictured in Figure~\ref{Fdemo}.
\begin{figure}[h]
\caption{}\label{Fdemo}
\begin{tikzpicture}
\draw  [shape=circle] (0,0) circle (.1) ;
\draw  [shape=circle] (2,0) circle (.1) node [above] {$\mathbf{1}$};
\draw  [shape=circle] (-2,0) circle (.1) node [above] {$\mathbf{1}$};
\draw  [shape=circle] (-6,0) circle (.1) node [above] {$\mathbf{1}$};
\shade [shading=ball, ball color=black] (1,-1) circle (.1) node [below] {$c$}; 
\shade [shading=ball, ball color=black] (1,1) circle (.1) node [right] {$d$};
\path (1,1)--(1,2) node [pos=.2,above]{$\mathbf{5}$};
\shade [shading=ball, ball color=black] (3,0) circle (.1) node [below] {$g$}; 
\draw [line width=1pt  ] (1,1)--(0.07,0.07)  node [pos=.5, above ] { $a$}; 
\draw [line width=1pt  ] (1,-1)--(0.07,-.07)  node [pos=.5, left ] { $b$}; 
\draw [line width=1pt  ] (2.1,0)--(3,0)  node [pos=.5, above ] { $f$}; 
\path  (1.2,1)--(1.2,-1)  node [pos=.5, right ]{$e$};
\path [pattern=north west lines]  (1,1)--(1,-1)--(2,0)--cycle;
\draw  [shape=circle] (-2,0) circle (.1);
\shade [shading=ball, ball color=black] (-3,-1) circle (.1) node [below] {$c$}; 
\shade [shading=ball, ball color=black] (-4,0) circle (.1) node [above] {$a$}; 
\shade [shading=ball, ball color=black] (-1,0) circle (.1) node [below] {$g$}; 
\draw [line width=1pt  ] (-3,-1)--(-4,0)  node [pos=.5, above ] { $b$}; 
\draw [line width=1pt  ] (-3,-1)--(-2.07,-0.07)  node [pos=.5, left ] { $e$}; 
\draw [line width=1pt  ] (-1.9,0)--(-1,0)  node [pos=.5, above ] { $f$}; 
\shade [shading=ball, ball color=black] (-8,0) circle (.1) node [below] {$b$}; 
\shade [shading=ball, ball color=black] (-6,0) circle (.1) node [below] {$f$}; 
\path (1,-1)--(1,-2) node [pos=.7,below]{$\H(I)$};
\path (-3,-1)--(-3,-2) node [pos=.7,below]{$\H(J)$};
\path (-7,-1)--(-7,-2) node [pos=.7,below]{$\H(J:z)$};
\end{tikzpicture}
\end{figure}
The number of neighbors of vertex $1$ have been reduced, as in $\H(J)$, or vertex $1$ is now closed, as in $\H(J:z)$.  

\begin{proof}[Proof of Lemma~\ref{Liso}]
(1) For $i = 1,2,\ldots,\mu-1$, set $f'_i = f_i/\gcd(f_i,z)$.  Then $J:z = (f_1',\ldots,f_{\mu - 1}')$, although these need not all be minimal generators of $J:z$.  However, $f_1,\ldots,f_{\mu-1}$ is a minimal generating set of $J$.  Exactly $n$ of the vertices of $\H$ were open.  Hence at most $n$ vertices of $\H'$ and $\H''$ are open.  If either $\H'$ or $\H''$ have adjacent open vertices, then there would be some edge containing both.  Hence there would be some variable dividing both corresponding generators, which would still be true in $I$, contradicting that $I$ had isolated open vertices.

For (2) and (3), note that $\mu$ is no longer a neighbor of $1$ in $\H'$ or $\H''$ and no new neighbors of $1$ are created.  So $|N_{\H'}(1)| < |N_{\H}(1)|$ and $|N_{\H''}(1)| < |N_{\H}(1)|$.  Moreover, note that if $1$ is closed in either $\H'$ or $\H''$, then we have reduced the total number of open vertices by at least one.

Finally note that, since $\mu$ is closed in $\H$, there is at least one variable that divides only $f_\mu$.  Hence this variable does not appear in $J$ or $J:z$.  Further, $J$ has exactly one fewer generator than $I$; otherwise, one of $f_1,\ldots,f_{\mu-1}$ would be divisible by another, contradicting their minimality in $I$.  This gives us $|X''| \le |X| - 1$ and $|V''| = |V| - 1$.  

In $J:z$, note that none of the $\deg(z)$ variables in $z$ appear among the generators of $J:z$.  If $|V'| = |V|-1$, in which case $f_1',\ldots,f'_{\mu-1}$ is a minimal generating set of $J:z$, then we are done.  Otherwise, at least one of these generators is not minimal.  We claim that all such nonminimal generators $f_i'$ correspond to closed vertices $i$ of $\H$.  Suppose $f_i'$ is not a minimal generator of $J:z$ and suppose that $i$ is open in $\H$.  Then there is a vertex $j$ with $f_j'$ dividing $f_i'$.  It follows that $i$ and $j$ are neighbors in $\H$.  Since $i$ is open in $\H$, $j$ must be closed in $\H$, and hence in $\H'$ as well.  But this contradicts that $f_j'$ divides $f_i'$.    Therefore, any nonminimal generators lost in $J:z$ corresponded to closed vertices in $\H$.  For each such closed vertex, there is at least one variable dividing the corresponding generator in $I$ that does not appear in $J:z$.  It follows that $|X'| \le |X| - \deg(z) - |V| + |V'| + 1$.
\end{proof}

\begin{thm} \label{Tiso}Let $I \subset R$ be a square-free monomial ideal and suppose that $\H = \H(I) = (V, X, E,  \E)$ has only isolated open vertices.  Then
\[\reg(R/I) \le |X| - |V|.\]
\end{thm}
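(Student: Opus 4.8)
The plan is to induct on the number $n$ of isolated open vertices of $\H = \H(I)$. For the base case $n = 0$, the hypergraph $\H$ has no open vertices at all, hence is saturated, and Proposition~\ref{Psat} gives exactly $\reg(R/I) = |X| - |V|$. For the inductive step, suppose $n \ge 1$, so there is an isolated open vertex; call it $1 \in V$. Since $\H(I)$ is separated and $1$ is a vertex of a hypergraph with at least one generator, the generator $f_1$ must share a variable with some other generator (otherwise $1$ would be a closed vertex or $f_1$ would not be minimal together with a nonempty ideal); more carefully, if $f_1$ shared no variables with other generators, then $R/I$ would split as a tensor product and we could reduce, so we may assume $N_\H(1) \neq \emptyset$. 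Because $\H$ has isolated open vertices and $1$ is open, every $w \in N_\H(1)$ is closed. Pick such a closed vertex and relabel it $\mu$, so $\mu \in N_\H(1)$ and $\mu$ is closed. Set $J = (f_1, \ldots, f_{\mu-1})$ and $z = f_\mu$.

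Next I would apply Lemma~\ref{Liso} with this choice of $1$ and $\mu$. Write $\H' = \H(J:z) = (V', X', E', \E)$ and $\H'' = \H(J) = (V'', X'', E'', \E)$. By part (1) of the lemma, both $\H'$ and $\H''$ have at most $n$ isolated open vertices, and by parts (2) and (3), in each of $\H'$, $\H''$ the vertex $1$ is either now closed — in which case the number of isolated open vertices has strictly dropped — or $|N(1)|$ has strictly decreased. This lets me set up an induction on the pair $(n, |N_\H(1)|)$ ordered lexicographically, or equivalently an inner induction on $|N_\H(1)|$ for fixed $n$: in every branch, the inductive hypothesis applies to both $R/(J:z)$ and $R/J$. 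Concretely, I get
\[
\reg\left(\frac{R}{J:z}\right) \le |X'| - |V'| \quad\text{and}\quad \reg\left(\frac{R}{J}\right) \le |X''| - |V''|.
\]

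Now I combine these with Lemma~\ref{Lreg}(1) applied to $I = (J, z)$, noting $I : z = J : z$ since $z = f_\mu$ is one of the generators, and $d = \deg(z)$:
\[
\reg\left(\frac{R}{I}\right) \le \max\left\{\reg\left(\frac{R}{J:z}\right) + \deg(z),\ \reg\left(\frac{R}{J}\right)\right\}.
\]
For the first term, the quantitative estimates from Lemma~\ref{Liso} give $|X'| \le |X| - \deg(z) - |V| + |V'| + 1$, so
\[
\reg\left(\frac{R}{J:z}\right) + \deg(z) \le |X'| - |V'| + \deg(z) \le |X| - |V| + 1 - |V| + |V'| \cdot 0 \ldots
\]
wait — more carefully: $|X'| - |V'| + \deg(z) \le (|X| - \deg(z) - |V| + |V'| + 1) - |V'| + \deg(z) = |X| - |V| + 1$. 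Hmm, this yields the bound $|X| - |V| + 1$, which is one too large; the fix is that when $1$ becomes closed in $\H'$ we lose a generator so $|V'| \le |V| - 2$, and when $1$ stays open the outer/inner induction already improved things — I should recheck the bookkeeping so that the "$+1$" is absorbed. For the second term, $|X''| \le |X| - 1$ and $|V''| = |V| - 1$ give $\reg(R/J) \le |X''| - |V''| \le (|X|-1) - (|V|-1) = |X| - |V|$, which is exactly what we want.

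The main obstacle I anticipate is precisely this constant-tracking in the first branch: ensuring that the $J:z$ term does not overshoot $|X| - |V|$. The resolution should be that the inequality $\reg(R/J:z) \le |X'| - |V'|$ from the inductive hypothesis, combined with the sharper count of variables killed in $J:z$ (every closed vertex whose generator becomes nonminimal contributes an extra lost variable, and losing generators strictly decreases $|V'|$ below $|V| - 1$), forces $|X'| - |V'| + \deg(z) \le |X| - |V|$. I would verify the edge cases — when $V' = V - 1$ versus $V' < V - 1$, and when $1$ is closed versus open in $\H'$ — separately, using the precise statement "$|X'| \le |X| - \deg(z) - |V| + |V'| + 1$" together with $|V'| \le |V| - 1$, and in the equality case $|V'| = |V|-1$ observe that then $f_1',\dots,f_{\mu-1}'$ are all minimal, so $1$ is still a vertex and one checks it has become closed or has strictly fewer neighbors, invoking the induction on $(n, |N_\H(1)|)$ with a strictly smaller invariant to pick up the missing unit.
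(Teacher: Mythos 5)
Your overall strategy --- induct on the number of isolated open vertices and on the neighbor count of a chosen open vertex, split off a closed neighbor $\mu$, set $J=(f_1,\dots,f_{\mu-1})$ and $z=f_\mu$, and feed the counts from Lemma~\ref{Liso} into the inductive hypothesis --- is exactly the paper's. But there is a genuine gap at the final step, and you have correctly located it yourself: your bound comes out as $|X|-|V|+1$. The source of the stray $+1$ is that you invoke the wrong part of Lemma~\ref{Lreg}. Your claim that $I:z=J:z$ is false: since $z=f_\mu\in I$, we have $I:z=R$, so Lemma~\ref{Lreg}(1) applied to $I$ and $z$ says nothing, and applied to $J$ and $z$ it bounds $\reg(R/J)$, not $\reg(R/I)$. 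The correct move is Lemma~\ref{Lreg}(3) applied to the ideal $J$ and the element $z$, using $(J,z)=I$:
\[
\reg\left(\frac{R}{I}\right)=\reg\left(\frac{R}{(J,z)}\right)\le \max\left\{\reg\left(\frac{R}{J}\right),\ \reg\left(\frac{R}{J:z}\right)+\deg(z)-1\right\}.
\]
The $-1$ here exactly cancels the $+1$ in the estimate $|X'|\le |X|-\deg(z)-|V|+|V'|+1$, giving $\reg(R/(J:z))+\deg(z)-1\le |X|-|V|$, and the $\reg(R/J)$ branch is $\le |X|-|V|$ as you computed. This is how the paper closes the argument.

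The repairs you propose instead do not work: vertex $1$ becoming closed in $\H'$ does not force $|V'|\le |V|-2$, and even if $|V'|$ drops further, the term $|V'|$ cancels out of $|X'|-|V'|+\deg(z)\le |X|-|V|+1$, so no amount of extra generator loss absorbs the $+1$; nor does the inner induction on $|N_\H(1)|$ supply a missing unit, since the inductive hypothesis only ever yields $\reg \le |X'|-|V'|$, not anything sharper. So the missing idea is not finer bookkeeping in Lemma~\ref{Liso} but the observation that adjoining a degree-$d$ generator costs only $d-1$ (the third inequality of the regularity lemma), rather than the $d$ you pay when colonning by a degree-$d$ element.
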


\begin{proof} We induct first on the number of isolated open vertices and then on the number
\[\min\{|N_{\H}(v)|:v \in V \text{ and $v$ is open in $\H$. }\}.\]
If $\H$ has no isolated open vertices, then $\H$ is saturated and the result follows from Proposition~\ref{Psat}.  Now assume $\H$ has exactly $n$ isolated open vertices and let $v \in V$ be such that $v$ is open in $\H$ and $|N_{\H}(v)|$ is minimal.   Without loss, we may assume $v = 1$.  Note that $|N_{\H}(1)| \ge 1$ since $v$ is open.   Pick a vertex $\mu \in N_{\H}(1)$,  which is necessarily closed.  Let $z = f_\mu$.  Let $J = (f_1,f_2,\ldots,f_{\mu - 1})$ and let $\H' = \H(J:z) = (X', V', E', \A', \E')$ and $\H'' = \H(J) = (X'',V'',E'',\A'',\E'')$.  By Lemma~\ref{Liso}, $\H'$ and $\H''$ either have fewer isolated open vertices or we have reduced the minimal number of adjacent vertices to an isolated open vertex.  So we may assume by induction that
\[\reg(R/J) \le |X''| - |V''| \text{ and } \reg(R/(J:z)) \le |X'| - |V'|.\]
Again by Lemma~\ref{Liso}, we have that
\[|X''| \le |X| - 1,\quad |V''| = |V| - 1 \text{ and } |X'| \le |X| - \deg(z) - |V| + |V'| + 1\]
Therefore,
\[\reg(R/J) \le |X''| - |V''| \le \left(|X| - 1\right) - \left(|V| - 1\right) = |X| - |V|,\]
and
\[\reg(R/(J:z)) \le |X'| - |V'| \le \left(|X| - \deg(z) - |V| + |V'| + 1\right) - |V'| = |X| - |V| - \deg(z) + 1.\]
Finally, it follows from Lemma~\ref{Lreg} that $\reg(R/I) \le |X| - |V|$.
\end{proof}

\begin{eg} Let $I = (efh,cefgij,abhij,dghij)$, as in Example~\ref{eg1}.  Note that $\H(I) = (V,X,E,\E)$ has a single isolated open vertex.  Therefore, by Theorem~\ref{Tiso}, we have
\[\reg(R/I) \le |X| - |V| = 10 - 4 = 6.\]
In this case, one computes that $\reg(R/I) = 6$.  
     So the bound in Theorem~\ref{Tiso} is tight in some cases.
\end{eg}

\begin{eg} The bound in Theorem~\ref{Tiso} is not always tight when $I$ has edges with multiple labels.  Let $R=k[a,b,\ldots,x,y,z]$ a polynomial
ring over a field $k$. Let $I=(f_{1},f_{2},f_{3})$ a square-free
monomial ideal, where $f_{1}=ab$, $f_{2}=acd$, $f_{3}=bef$.  $\H = \H(I) = (V,X,E,\E)$ has exactly one isolated open vertex at $2 \in V$.  See Figure~\ref{F4}.  Theorem~\ref{Tiso} implies that $\reg(R/I) \le |X| - |V| = 6 - 3 = 3$; however, $\reg(R/I) = 2$.  
\end{eg}

\begin{figure}[h]
\caption{$\H((ab,acd,bef))$}\label{F4}
\begin{tikzpicture}
\usetikzlibrary{patterns}
\draw  [shape=circle] (0,1) circle (.1) ;
\shade [shading=ball, ball color=black] (-1,-1) circle (.1) node [below] {$c,d$}; 
\shade [shading=ball, ball color=black] (1,-1) circle (.1) node [below ]  { $e,f$};
\draw [line width=1pt  ] (.07,.93)--(1,-1)  node [pos=.5, right ] { $b$ }; 
\draw [line width=1pt  ] (-.07,.93)--(-1,-1)  node [pos=.5, left] { $a$ }; 
\end{tikzpicture}
\end{figure}

\begin{eg} In general, we need the hypothesis about isolated open vertices.  Let $I = (ab, bc, ac)$.  Then $\H = \H(I) = (V,X,E,\E)$, pictured in Figure~\ref{F5}, does not have isolated open vertices.  One quickly computes that $|X| - |V| = 3 - 3 = 0$ and $\reg(R/I) = 1$.  
\end{eg}

\begin{figure}[h]
\caption{$\H((ab,ac,bc))$}\label{F5}
\begin{tikzpicture}
\usetikzlibrary{patterns}
\draw  [shape=circle] (0,0) circle (.1) ;
\draw  [shape=circle] (-2,-2) circle (.1) ;
\draw  [shape=circle] (2,-2) circle (.1) ;
\draw [line width=1pt  ] (-.07,-.07)--(-1.93,-1.93)  node [pos=.5, above ] { $a$ }; 
\draw [line width=1pt  ] (-1.9,-2)--(1.9,-2)  node [pos=.5, below ] { $b$ } ;
\draw [line width=1pt  ] (1.93,-1.93)--(.07,-.07)  node [pos=.5, right ] { $c$ } ;
\end{tikzpicture}
\end{figure}

If we want to have a more general upper bound like that in the previous theorem, we can modify the hypergraph of $I$ until we are in the isolated open vertices situation and modify the upper bound accordingly.  We separate the inductive step of the proof in the following lemma.

\begin{lem}\label{Lfill} Let $I  \subset R$ be a square-free monomial ideal with minimal monomial generators $f_1,\ldots,f_\mu$, and let $\H = \H(J) = (V,X,E,\E)$.  Suppose $1 \in V$ is an open vertex.  Let $x \in \A - X$ and set $J = (f_1x,f_2,\ldots,f_\mu)$.  Let $\H' = \H(I) = (X',V',E',\E')$ and let $\H'' = \H((x,J)) = (V'',X'',E'',\E'')$.  Then  
\[|V| = |V'| = |V''| \text{ and  } |X'| = |X''| = |X| + 1.\]
Moreover, vertex $1$ is closed in both $\H'$ and $\H''$.
\end{lem}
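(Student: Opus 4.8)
The plan is to verify each of the three clusters of assertions directly from the definitions; the only non-formal ingredient is the hypothesis that vertex $1$ is open, and it enters in exactly one place. First I would settle notation: the three hypergraphs in play are $\H = \H(I)$, $\H' = \H(J)$ with $J = (f_1 x, f_2, \ldots, f_\mu)$, and $\H'' = \H((x,J))$, and since $x \mid f_1 x$ we have $(x, J) = (x, f_2, \ldots, f_\mu)$ as ideals. The key preliminary step is to pin down minimal generating sets. Because $x \in \A - X$ divides none of $f_2, \ldots, f_\mu$ and because $f_1, \ldots, f_\mu$ minimally generate $I$, one checks that $f_1 x, f_2, \ldots, f_\mu$ minimally generate $J$: no $f_j$ divides $f_1 x$, since $f_j$ is $x$-free and $f_j \nmid f_1$; and $f_1 x$ divides no $f_j$, since it involves $x$. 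Likewise $x, f_2, \ldots, f_\mu$ minimally generate $(x,J)$ (here $x$ is a minimal generator because no $f_j$ of degree $\ge 1$ divides the variable $x$). In particular $|V'| = |V''| = \mu = |V|$, and under the generator--vertex correspondence I would take vertex $1$ of $\H'$ to be $f_1 x$ and vertex $1$ of $\H''$ to be $x$.

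With the generating sets in hand the variable counts are immediate: $J$ uses precisely the variables of $X$ together with the new variable $x$, so $|X'| = |X| + 1$. For $(x, J) = (x, f_2, \ldots, f_\mu)$ the point is that discarding $f_1$ costs no variable, and this is where I would invoke that vertex $1$ is open: $\{1\} \notin \E$ means no label $a$ has $E_a = \{1\}$, i.e.\ every variable dividing $f_1$ also divides some $f_j$ with $j \ge 2$; hence the variables occurring among $f_2, \ldots, f_\mu$ already exhaust $X$, so the variables of $(x,J)$ are exactly $X \cup \{x\}$ and $|X''| = |X| + 1$. Finally, vertex $1$ is closed in both $\H'$ and $\H''$ for the same trivial reason: the variable $x$ divides the first generator ($f_1 x$ in $J$, resp.\ $x$ in $(x,J)$) and no other generator, so $E_x = \{1\}$ and $\{1\}$ is an edge.

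The argument is essentially bookkeeping; the only points needing care are the minimality checks --- confirming that none of $f_2, \ldots, f_\mu$ becomes redundant after either modification, so that $\mu$ vertices really survive --- and recognizing that the open-vertex hypothesis is genuinely needed for $|X''| = |X| + 1$: without it a variable supported only on $f_1$ would disappear in $\H''$, and one would obtain merely $|X''| \le |X|$.
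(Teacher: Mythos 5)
Your proof is correct and follows essentially the same route as the paper's (which simply describes how $\H'$ and $\H''$ are obtained from $\H$ by adding the label $x$ on $\{1\}$ and then stripping vertex $1$ from the other edges, leaving the counts to the reader). You correctly identify the generator sets, and your explicit observation that the openness of vertex $1$ is exactly what prevents a variable from disappearing in $\H''$ is the one point the paper leaves implicit.
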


\begin{proof} First note that $I = J:x$.  By the assumptions above, we obtain $\H'$ by filling in the open vertex $1$ in $\H$ and setting $E'_x = \{1\}$.  $\H''$ is obtained from $\H'$ by replacing any edge $F \in \E'$ with $F\smallsetminus\{1\}$.  The generator $x$ replaces $f_1x$ and all other generators remain minimal.  The conclusion then follows easily.
\end{proof}

We can now give a general upper bound on the regularity of all square-free monomial ideals.

\begin{thm}\label{Pfill} Let $I \subset R$ be a square-free monomial ideal.  Let $\H = \H(I) = (V, X, E,  \E)$.  Suppose that if we can add edges of size $1$, say $\{v_1\},\ldots,\{v_t\}$ to $\E$, where $v_1,\ldots,v_t \in V$, we obtain a new hypergraph with isolated open vertices.  Then
\[\reg(R/I) \le |X| - |V| + t.\]
\end{thm}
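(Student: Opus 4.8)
The plan is to induct on $t$, the number of size-one edges we must add to $\H$ to reach a hypergraph with isolated open vertices. The base case $t=0$ is exactly Theorem~\ref{Tiso}, since then $\H$ already has isolated open vertices and $\reg(R/I) \le |X| - |V|$. For the inductive step, suppose $t \ge 1$ and that $\{v_1\},\ldots,\{v_t\}$ are the size-one edges whose addition produces isolated open vertices. In particular $v_1$ is an open vertex of $\H$, and the generator $f_{v_1}$ has no variable dividing it alone. Without loss of generality take $v_1 = 1$.

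The key move is to introduce a fresh variable. Pick $x \in \A \smallsetminus X$ (enlarging $\A$ if necessary) and set $J = (f_1 x, f_2, \ldots, f_\mu)$. Then, in the notation of Lemma~\ref{Lfill}, $I = J : x$ and $(x, J) = (x, f_2, \ldots, f_\mu)$, and Lemma~\ref{Lreg}(1) applied with $z = x$ (so $d = 1$) gives
\[
\reg\left(\frac{R}{J}\right) \le \max\left\{\reg\left(\frac{R}{J:x}\right) + 1, \reg\left(\frac{R}{(x,J)}\right)\right\} = \max\left\{\reg\left(\frac{R}{I}\right) + 1, \reg\left(\frac{R}{(x,J)}\right)\right\},
\]
which is the wrong direction; instead I would apply Lemma~\ref{Lreg}(2) to the ideal $J$ with $z = x$:
\[
\reg\left(\frac{R}{I}\right) = \reg\left(\frac{R}{J:x}\right) \le \max\left\{\reg\left(\frac{R}{J}\right) - 1, \reg\left(\frac{R}{(x,J)}\right)\right\}.
\]
By Lemma~\ref{Lfill}, $\H(J) = \H'$ has vertex $1$ closed, $|V(\H')| = |V|$, $|X(\H')| = |X| + 1$, and the remaining open vertices of $\H'$ are among $\{v_2,\ldots,v_t\}$; adding those $t-1$ size-one edges to $\H'$ yields a hypergraph with isolated open vertices (the size-one edge at $v_1$ is now already present). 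So $\H(J)$ satisfies the hypothesis of the theorem with parameter $t-1$, and by induction
\[
\reg\left(\frac{R}{J}\right) \le |X(\H')| - |V(\H')| + (t-1) = (|X|+1) - |V| + (t-1) = |X| - |V| + t.
\]
Similarly $\H'' = \H((x,J))$ has vertex $1$ closed, $|V(\H'')| = |V|$, and $|X(\H'')| = |X| + 1$; the variable $x$ contributes the size-one edge $\{1\}$, and $(x,J)$ has the same remaining open vertices, so $\H''$ also satisfies the hypothesis with parameter $t - 1$, giving $\reg(R/(x,J)) \le (|X|+1) - |V| + (t-1) = |X| - |V| + t$. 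Combining the two bounds with the displayed inequality from Lemma~\ref{Lreg}(2) yields $\reg(R/I) \le |X| - |V| + t$, completing the induction.

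The main subtlety — and the step I would be most careful about — is verifying that the induction hypothesis genuinely applies to $\H(J)$ and $\H((x,J))$ with the reduced parameter $t-1$: one must check that closing the vertex $v_1$ (resp. adjoining the new closed vertex via $x$) does not create new open vertices or new adjacencies between open vertices, and that the \emph{same} set $\{v_2,\ldots,v_t\}$ of size-one edges still suffices to reach the isolated-open-vertices condition. This is where Lemma~\ref{Lfill}'s explicit description of $\H'$ and $\H''$ (obtained from $\H$ by "filling in" vertex $1$, and then deleting $1$ from every edge) does the work: filling in a vertex only removes it from the pool of open vertices and cannot increase the neighbor sets of any other open vertex, and deleting $1$ from edges can only shrink adjacencies. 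A secondary point is the bookkeeping of the variable counts and vertex counts through $J:x$, $J$, and $(x,J)$, all of which is handled cleanly by Lemma~\ref{Lfill}. Everything else is a direct assembly of Lemmas~\ref{Lreg}, \ref{Lfill} and Theorem~\ref{Tiso}.
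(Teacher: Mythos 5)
Your proposal is correct and follows essentially the same route as the paper's own proof: introduce a fresh variable $x$, pass to $J=(f_{v}x,f_2,\ldots,f_\mu)$ so that $I=J:x$, apply the induction hypothesis with parameter $t-1$ to $\H(J)$ and $\H((x,J))$ via Lemma~\ref{Lfill}, and conclude with Lemma~\ref{Lreg}(2). The only cosmetic differences are that the paper fills in $v_t$ rather than $v_1$, and your parenthetical remark that the open vertices of $\H'$ are ``among $\{v_2,\ldots,v_t\}$'' is a slight overstatement (they need not all be among the $v_i$; what matters, and what you correctly use, is that filling $\{v_2\},\ldots,\{v_t\}$ suffices to isolate them).
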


\begin{proof} We induct on $t$.  If $t =  0$, then $\H$ has isolated open vertices and we rely on Theorem~\ref{Tiso}.  So assume $t > 0$ and the result holds for smaller cases.  Write $I = (f_1,\ldots,f_\mu)$.  Let $v = v_t$ and pick $x \in \A - X$.  Define $J = (f_1,\ldots,f_vx,\ldots,f_\mu)$ so that $J:x = I$.  Set $\H' = \H(J) = (X',V',E',\E)$ and $\H'' = \H(x,J) = (X'',V'',E'',\E'')$.  By Lemma~\ref{Lfill}, we have
\[|V| = |V'| = |V''| \text{ and } |X'| = |X''| = |X| + 1.\]
Moreover, we may isolate the open vertices of $\H'$ and $\H''$ by filling in at most $t - 1$ open vertices in each.  By induction, we have
\[\reg(R/(J)) \le |X'| - |V'| + t -1 = |X| - |V| + t,\]
and
\[\reg(R/(x,J)) \le |X''| - |V''| + t - 1 = |X| - |V| + t.\]
Finally, by Lemma~\ref{Lreg}, we get
\[\reg(R/I) = \reg(R/(J,x)) \le |X| - |V| + t.\]
\end{proof}

\begin{eg} Let $R = K[a,b,\ldots,y,z]$  Set 
\[I = (di, ade, bij, fgij, efg, jh, ch) \text{ and } J = (di, ade, bij, fgijk, efg, jh, ch).\]
The hypergraphs $\H(I) = (V,X,E,\E)$ and $\H(J) = (V',X',E',\E')$ are pictured in Figure~\ref{F6}.  Note that $\H(J)$ has isolated open vertices.  Hence $\reg(R/J) \le |X'| - |V'| = 11 - 7 = 4$ by Theorem~\ref{Tiso}.  We obtain $\H(J)$ from $\H(I)$ by filling in the vertex corresponding to '$k$' in $\H(J)$.  By Theorem~\ref{Pfill}, $\reg(R/I) \le |X| - |V| + 1 = 10 - 7 + 1 = 4$.  One can check that $\reg(R/I) = 4$ in this case.
\end{eg}

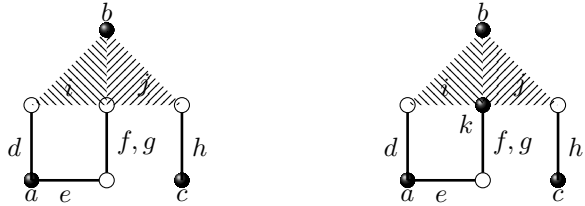
\begin{figure}[h]
\caption{$\H((di, ade, bij, fgij, efg, jh, ch))$ \text{ and } $\H((di, ade, bij, fgijk, efg, jh, ch))$.}\label{F6}
\begin{tikzpicture}

\draw  [shape=circle] (-1,0) circle (.1) ;
\draw  [shape=circle] (0,0) circle (.1) ;
\draw  [shape=circle] (1,0) circle (.1) ;
\draw  [shape=circle] (0,-1) circle (.1) ;
\shade [shading=ball, ball color=black] (0,1) circle (.1) node [above] {$b$} ; 
\shade [shading=ball, ball color=black] (-1,-1) circle (.1) node [below ]  { $a$}  ;
\shade [shading=ball, ball color=black] (1,-1) circle (.1) node [below ] { $c$}  ;
\draw [line width=1pt  ] (-1,-.1)--(-1,-1)  node [pos=.5, left ] { $d$ }; 
\draw [line width=1pt  ] (0,-.1)--(0,-.9)  node [pos=.5, right ] { $f,g$ };
\draw [line width=1pt  ] (-1,-1)--(-.1,-1)  node [pos=.5, below ] { $e$ };
\draw [line width=1pt  ] (1,-.1)--(1,-1)  node [pos=.5, right ] { $h$ };
\path  (-1,0)--(0,0)  node [pos=.5, above ]{$i$};
\path  (0,0)--(1,0)  node [pos=.5, above ]{$j$};
\path [pattern=north west lines]   (0,1)--(-1,0)--(0,0)--cycle;
\path [pattern=north east lines]     (0,1)--(0,0)--(1,0)--cycle;
\draw  [shape=circle] (4,0) circle (.1) ;
\draw  [shape=circle] (6,0) circle (.1) ;
\draw  [shape=circle] (5,-1) circle (.1) ;
\shade [shading=ball, ball color=black] (5,0) circle (.1) node [below left] {$k$} ; 
\shade [shading=ball, ball color=black] (5,1) circle (.1) node [above] {$b$} ; 
\shade [shading=ball, ball color=black] (4,-1) circle (.1) node [below ]  { $a$}  ;
\shade [shading=ball, ball color=black] (6,-1) circle (.1) node [below ] { $c$}  ;
\draw [line width=1pt  ] (4,-.1)--(4,-1)  node [pos=.5, left ] { $d$ }; 
\draw [line width=1pt  ] (5,-.1)--(5,-.9)  node [pos=.5, right ] { $f,g$ };
\draw [line width=1pt  ] (4,-1)--(4.9,-1)  node [pos=.5, below ] { $e$ };
\draw [line width=1pt  ] (6,-.1)--(6,-1)  node [pos=.5, right ] { $h$ };
\path  (4,0)--(5,0)  node [pos=.5, above ]{$i$};
\path  (5,0)--(6,0)  node [pos=.5, above ]{$j$};
\path [pattern=north west lines]   (5,1)--(4,0)--(5,0)--cycle;
\path [pattern=north east lines]     (5,1)--(5,0)--(6,0)--cycle;
\end{tikzpicture}
\end{figure}

We now show that the bound in Theorem~\ref{Pfill} is tight for a large class of hypergraphs.  We first need the following lemma.

\begin{lem}\label{Lsimple} Let $I \subset R$ be a square-free monomial ideal.  Suppose $\H(I) = (V,X,E,\E)$ has $n$ isolated simple edges, $F_1,F_2,F_3\ldots,F_n \in \E$.  Set $z = \prod_{\substack{a \in X\\E_a = F_1}} a$, $\H' = \H(I:z) = (X',V',E',\E)$ and $\H'' = \H((I,z)) = (X'',V'',E'',\E'')$.  Then
\begin{enumerate}
\item $\H'$ has at most $\sum_{i = 1}^n |F_n|$ total open vertices corresponding to the open vertices in the simple edges of $\H$, including at most $|F_1|$ isolated open vertices corresponding to the vertices of $F_1$.
\item $\H''$ has exactly $n-1$ isolated simple edges corresponding to $F_2,\ldots,F_n$ and no other open vertices.
\end{enumerate}
Moreover, we have
\[|X'| \le |X| - \deg(z) - |V| + |V'|, \quad |X''| = |X|  \text{ and } |V''| = |V| - |F_1| + 1.\]
\item 
\end{lem}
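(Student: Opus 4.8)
The plan is to pass to the two ideals $I:z$ and $(I,z)$ and read off their minimal generators directly. The starting observation is that, since a variable $a$ divides $f_j$ exactly when $j\in E_a$, and every variable of $z=\prod_{E_a=F_1}a$ has label $F_1$, we get $z\mid f_j$ precisely when $j\in F_1$; moreover $f_j=z\cdot(f_j/z)$ for $j\in F_1$. Hence $I:z$ is generated by $\{f_j/z:j\in F_1\}\cup\{f_j:j\notin F_1\}$ and $(I,z)$ by $\{z\}\cup\{f_j:j\notin F_1\}$ (each $f_j$ with $j\in F_1$ being redundant in $(I,z)$).

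I would treat $\H''=\H((I,z))$ first. The generators $z$ and the $f_j$ with $j\notin F_1$ are pairwise non-dividing: $z\nmid f_j$ since $z\mid f_j\iff j\in F_1$; $f_j\nmid z$ since that would force every variable of $f_j$ to have label $F_1$, contradicting $j\in E_a$ for those variables; and the $f_j$'s stay mutually non-dividing by minimality in $I$. So $|V''|=(|V|-|F_1|)+1$. Every variable of $\H$ survives: a variable with label $F_1$ now divides only the new generator $z$, and a variable $a$ with $E_a\ne F_1$ has $E_a\not\subseteq F_1$ (the only subedges of the simple edge $F_1$ are $\emptyset$ and $F_1$), so it divides some $f_j$ with $j\notin F_1$. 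Thus $|X''|=|X|$, and concretely $\H''$ is $\H$ with the vertices of $F_1$ deleted, one new vertex adjoined that carries exactly the labels of $F_1$ as a (closed) singleton edge, and every surviving edge $F$ replaced by $F\smallsetminus F_1$. Since $F_2,\dots,F_n$ are disjoint from $F_1$ they persist unchanged with all vertices open; using that each open vertex of $\H$ lies in a \emph{unique} simple edge --- so that any edge meeting two of the $F_i$, or properly containing one, is non-simple --- one checks that no $F_i$ gains a new proper subedge or a newly closed vertex, which gives part (2).

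For $\H'=\H(I:z)$: no vertex $j\in F_1$ can become non-minimal, since $f_k/z\mid f_j/z$ or $f_k\mid f_j/z$ would give $f_k\mid f_j$, against minimality in $I$; so all $|F_1|$ vertices of $F_1$ survive. If $j$ is closed in $\H$ and survives, it stays closed: a variable $a$ with $E_a=\{j\}$ divides no $f_l/z$ (as $j\notin F_1$) and no $f_l$ with $l\ne j$, so it still divides only the generator at $j$. Hence every open vertex of $\H'$ corresponds to an open vertex of $\H$, necessarily a vertex of some $F_i$, so $\H'$ has at most $\sum_{i=1}^{n}|F_i|$ open vertices, of which at most $|F_1|$ --- exactly the ones attached to $F_1$ --- can be isolated open, which is part (1). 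For the inequality, the $\deg(z)$ labels of $F_1$ vanish from $I:z$; a surviving variable $a$ (so $E_a\ne F_1$) disappears iff every vertex of $E_a$ is lost, and each lost vertex $j$ (necessarily with $j\notin F_1$ and $f_k/z\mid f_j$ for some $k\in F_1$) can then be charged injectively to such a vanishing variable, yielding $|X'|\le|X|-\deg(z)-(|V|-|V'|)=|X|-\deg(z)-|V|+|V'|$. The equalities $|V''|=|V|-|F_1|+1$ and $|X''|=|X|$ are then immediate from the generator descriptions.

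I expect the main obstacle to be exactly this combinatorial bookkeeping: proving that in $(I,z)$ each $F_i$ with $i\ge 2$ stays simple with all vertices open, and setting up the injective charging of lost vertices to vanishing variables in $I:z$. Both are the places where the full force of the "isolated simple edges" hypothesis --- not merely pairwise disjointness of the $F_i$ --- must be used, to exclude edges that straddle two simple edges or that sit above one. No facts about regularity enter the proof; it is purely a statement about the hypergraph transformations $I\mapsto I:z$ and $I\mapsto(I,z)$.
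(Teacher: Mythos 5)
Your overall strategy is the same as the paper's: describe the (minimal) generators of $I:z$ and $(I,z)$ explicitly and read the hypergraph data off of them. Your treatment of $\H''$, of parts (1)--(2), of $|X''|=|X|$ and $|V''|=|V|-|F_1|+1$, and of the persistence of closed vertices in $\H'$ is correct and in fact more careful than the printed proof. The genuine gap is the step you yourself flagged as the crux: the ``injective charging'' of lost vertices of $\H'$ to vanishing variables. You assert that each lost vertex $j$ (one with $f_k/z\mid f_j$ for some $k\in F_1$) can be charged injectively to a variable that disappears from $I:z$, but you give no construction, and none exists in general: such a $j$ may be an \emph{open} vertex lying in some $F_i$ with $i\ge 2$, and open vertices need not own a private label. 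Concretely, take $I=(ac,\,ad,\,bc,\,bd,\,cde)$, so that $E_a=\{1,2\}$, $E_b=\{3,4\}$, $E_c=\{1,3,5\}$, $E_d=\{2,4,5\}$, $E_e=\{5\}$. The simple edges are $F_1=\{1,2\}$ and $F_2=\{3,4\}$, and each open vertex lies in exactly one of them, so the hypothesis holds with $n=2$. Taking $z=a$ gives $I:z=(c,d)$, hence $|X'|=|V'|=2$, while $|X|-\deg(z)-|V|+|V'|=5-1-5+2=1$. Three vertices ($3$, $4$, $5$) are lost but only two labels besides $a$ vanish ($b$ and $e$, since $c$ and $d$ still meet the surviving vertices of $F_1$), so no injective charging is possible and the claimed inequality $|X'|\le |X|-\deg(z)-|V|+|V'|$ actually fails.

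To be fair, the paper's own proof has exactly the same soft spot: it only records that each \emph{closed} discarded vertex carries away at least one variable and is silent about discarded open vertices, which is precisely where the count breaks. So you have not overlooked an idea that is present in the paper; you have reproduced its argument and made its gap explicit by promising an injection that cannot in general be built. (A secondary, smaller issue: you assert rather than prove that the surviving open vertices of $F_1$ are \emph{isolated} open in $\H'$; an open vertex of $F_1$ can be adjacent, via a non-simple edge, to a surviving open vertex of some $F_i$, $i\ge 2$, so this also deserves an argument or a weakening of the claim.) If the $|X'|$ inequality is to be salvaged, either the hypotheses must be strengthened so that every generator that becomes non-minimal in $I:z$ corresponds to a closed vertex, or the bound must be weakened to account only for lost closed vertices.
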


\begin{proof} First consider $\H'$.  All $\deg(z)$ variables that appeared in $z$ are missing in $I:z$.    If $\H$ with $F_1$ removed is separated, then the remaining generators of $I$ minimally generate $I:z$ and we are done.  If the resulting hypergraph is not separated, then some of the generators corresponding to neighbors of the open vertices of $F_1$ may not be minimal and can be discarded.  Note that for every closed vertex corresponding to a minimal generator of $I$ that we discard, we remove at least one variable.  It follows that $|X'| \le |X| - \deg(z) - |V| + |V'|$.

In $\H''$, we add one additional generator $z$.  Any of the generators corresponding to the vertices of $F_1$ are no longer minimal and deleted.  Since the vertices in $F_1$ were open and since $F_1$ was simple, $|X''| = |X|$.  All other generators remain minimal, and so $|V| = |V''| - |F_1| + 1$.
\end{proof}

\begin{thm}\label{Tsimple} Let $I \subset R$ be a square-free monomial ideal and suppose that $\H(I) = (V,X, E,  \E)$ has isolated simple edges.  Then
\[\reg(R/I) = |X| - |V| + \sum_{\substack{F \in \E\\F \text{ simple}}} (|F| - 1).\]
\end{thm}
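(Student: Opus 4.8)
The plan is to prove the formula by induction on the number $n$ of isolated simple edges, using Lemma~\ref{Lsimple} together with the short exact sequence machinery of Lemma~\ref{Lreg} and Corollary~\ref{Creg}. The base case $n = 0$ means $\H(I)$ has no simple edges containing open vertices; in fact, if every open vertex lies in a simple edge and there are no simple edges, then there are no open vertices at all, so $\H(I)$ is saturated and Proposition~\ref{Psat} gives $\reg(R/I) = |X| - |V|$, which matches the formula since the sum on the right is empty. (One should check the degenerate reading of ``isolated simple edges'' carefully here — the definition says every open vertex is contained in exactly one simple edge, so $n = 0$ forces saturation.)

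For the inductive step, fix a simple edge $F_1$ among the isolated simple edges $F_1,\dots,F_n$, and set $z = \prod_{a\in X,\, E_a = F_1} a$, a squarefree monomial of degree $\deg(z) = n_{F_1} \ge 1$. Applying Lemma~\ref{Lsimple}, we get hypergraphs $\H' = \H(I:z)$ and $\H'' = \H((I,z))$ with the stated vertex/label counts, where $\H''$ has exactly the $n-1$ isolated simple edges $F_2,\dots,F_n$ and no other open vertices, and $\H'$ has isolated open vertices (the at most $|F_1|$ vertices coming from $F_1$) — so Theorem~\ref{Tiso} applies to $\H'$, while induction applies to $\H''$. The key numerical inputs from Lemma~\ref{Lsimple} are $|X''| = |X|$, $|V''| = |V| - |F_1| + 1$, and $|X'| \le |X| - \deg(z) - |V| + |V'|$. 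I would then compute
\[
\reg(R/(I,z)) \le |X''| - |V''| + \sum_{i=2}^n(|F_i| - 1) = |X| - |V| + (|F_1| - 1) + \sum_{i=2}^n(|F_i|-1) = |X| - |V| + \sum_{i=1}^n(|F_i|-1),
\]
using the induction hypothesis on $\H''$ (noting $\sum_{F\text{ simple in }\H''}(|F|-1) = \sum_{i=2}^n(|F_i|-1)$; one must confirm $\H''$ has \emph{no} simple edges beyond $F_2,\dots,F_n$, which follows from Lemma~\ref{Lsimple}(2) since it has no other open vertices and a simple edge contains only open vertices... actually a simple edge could contain closed vertices, so this needs a short separate argument — see below). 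Similarly, from Theorem~\ref{Tiso} applied to $\H'$,
\[
\reg(R/(I:z)) \le |X'| - |V'| \le |X| - \deg(z) - |V|,
\]
so $\reg(R/(I:z)) + \deg(z) \le |X| - |V| \le |X| - |V| + \sum_{i=1}^n(|F_i|-1)$, and moreover $\reg(R/(I:z)) + \deg(z) \le \reg(R/(I,z))$ would let us invoke Corollary~\ref{Creg}(2) to get equality $\reg(R/I) = \reg(R/(I,z))$ — but I need to be careful whether that inequality actually holds. The cleaner route: Lemma~\ref{Lreg}(1) gives $\reg(R/I) \le \max\{\reg(R/(I:z)) + \deg(z),\ \reg(R/(I,z))\}$, and both terms are bounded by $|X| - |V| + \sum_{i=1}^n(|F_i|-1)$, which proves the $\le$ direction.

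For the reverse inequality $\reg(R/I) \ge |X| - |V| + \sum(|F_i|-1)$, I would again induct, this time extracting equality from the short exact sequence. The point is that $\reg(R/(I,z))$ \emph{equals} $|X|-|V|+\sum_{i=1}^n(|F_i|-1)$ by induction (not just $\le$), and $\reg(R/(I:z)) + \deg(z) \le |X|-|V| < \reg(R/(I,z))$ strictly (since $\sum(|F_i|-1) \ge |F_1|-1 \ge 1$), so Corollary~\ref{Creg}(2) gives $\reg(R/I) = \reg(R/(I,z)) = |X|-|V|+\sum_{i=1}^n(|F_i|-1)$ exactly. This simultaneously handles both directions and makes the induction self-sustaining. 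The main obstacle I anticipate is the bookkeeping around which vertices of $\H'$ and $\H''$ remain open and whether $\H''$'s simple edges are exactly $F_2,\dots,F_n$ — in particular ruling out that passing to $(I,z)$ accidentally creates a new simple edge or that a vertex of $F_1$ remains problematically open in $\H'$; these are precisely the claims packaged in Lemma~\ref{Lsimple}, so the real work is checking that Lemma~\ref{Lsimple} is being applied with the hypotheses it needs (that $F_1$ is genuinely isolated and simple, and that the reductions to minimal generating sets behave as claimed). I would also double-check the edge case where $|V'| $ drops a lot in $\H'$ — the inequality $|X'| \le |X| - \deg(z) - |V| + |V'|$ from Lemma~\ref{Lsimple} is exactly what absorbs that, so no separate argument is needed.
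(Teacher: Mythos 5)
Your overall architecture --- induction on the number $n$ of isolated simple edges, coloning out $z=\prod_{E_a=F_1}a$, feeding the counts from Lemma~\ref{Lsimple} into Corollary~\ref{Creg}(2) to force $\reg(R/I)=\reg(R/(I,z))$ --- is exactly the paper's, and your treatment of $\H''=\H((I,z))$ and of the final equality step is correct. But there is one genuine error in the inductive step: you apply Theorem~\ref{Tiso} to $\H'=\H(I:z)$, asserting that its open vertices are the isolated ones coming from $F_1$. Lemma~\ref{Lsimple}(1) says something weaker: $\H'$ has up to $\sum_{i=1}^n|F_i|$ open vertices, namely those from \emph{all} the simple edges, and only the ones from $F_1$ are guaranteed to be isolated. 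When $n\ge 2$, the open vertices inside each surviving edge $F_i$ ($i\ge 2$) are still open and are neighbors of one another, so $\H'$ does \emph{not} have only isolated open vertices and Theorem~\ref{Tiso} does not apply. The paper's fix is to invoke Theorem~\ref{Pfill} instead, filling in $t=\sum_{i=2}^n(|F_i|-1)$ vertices, which yields the weaker bound $\reg(R/(I:z))\le |X|-|V|+\sum_{i=2}^n(|F_i|-1)-\deg(z)$. That weaker bound still suffices, because $\reg(R/(I,z))=|X|-|V|+\sum_{i=1}^n(|F_i|-1)$ exceeds it by at least $|F_1|-1\ge 1$ after adding back $\deg(z)$, so Corollary~\ref{Creg}(2) applies as you intended. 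So the gap is repairable by a one-line substitution, but as written your bound on $\reg(R/(I:z))$ is unjustified.

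Two smaller remarks. First, the worry you flag about simple edges possibly containing closed vertices is unfounded: if $v\in F$ is closed then $\{v\}\in\E$ is a proper nonempty subedge of $F$, contradicting simplicity; hence simple edges consist entirely of open vertices, and under the isolated-simple-edges hypothesis the $F_1,\dots,F_n$ really are all the simple edges. Second, your instinct to run a separate ``reverse inequality'' induction is unnecessary: as in the paper, the induction hypothesis already gives \emph{equality} for $\reg(R/(I,z))$, and Corollary~\ref{Creg}(2) transfers that equality to $\reg(R/I)$ in a single pass, which is the route you ultimately settle on.
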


\begin{proof} We induct on the number $n$ of isolated simple edges of $\H$.  If $n = 0$, then $\H$ is saturated and the result follows from Proposition~\ref{Psat}.  

Now assume $n > 0$, let $F_1,\ldots,F_n$ be those isolated simple edges of $\H$, and assume the result holds for hypergraphs with at most $n-1$ isolated simple edges.  Let
\[z = \prod_{\substack{a \in X\\E_a = F_1}} a.\]
Let $\H' = \H(I:z) = (V',X',E',\E')$ and $\H'' = \H((I,z)) = (V'',X'',E'',\E'')$.  By Lemma~\ref{Lsimple},
\[|X'| \le |X| - \deg(z) - |V| + |V'|.\]
Moreover, we need to fill in at most $\sum_{i = 2}^n (|F_i| - 1)$ open vertices in $\H'$ to create a graph with isolated open vertices.  By Theorem~\ref{Pfill},
\[\reg(R/(I:z)) \le |X'| - |V'| + \sum_{i = 2}^n (|F_i| - 1) \le |X| - |V| + \sum_{i = 2}^n (|F_i| - 1) - \deg(z).\]
Also by Lemma~\ref{Lsimple}, $\H''$ has exactly $n-1$ isolated simple edges of sizes $|F_2|, |F_3|,\ldots, |F_n|$ and no other open vertices.  We also have
\[|X''| = |X|  \text{ and } |V''| = |V| - |F_1| + 1.\]
By induction, we have
\[\reg(R/(I,z)) = |X''| - |V''| + \sum_{i = 2}^n \left(|F_i| - 1\right) = |X| - |V| + \sum_{i = 1}^n\left(|F_i| - 1\right).\]
Since $|F_1| \ge 2$, $\reg(R/(I:z)) + \deg(z) \le \reg(R/(I,z))$.  Thus by Corollary~\ref{Creg},
\[\reg(R/I) = \reg(R/(I,z)) =  |X| - |V| + \sum_{i = 1}^n\left(|F_i| - 1\right).\]
\end{proof}

\begin{eg}
Let $R=K[a,b,\ldots,y,z]$. Let $I=(f_{1},\ldots,f_{7})$, where $f_{1}=ab$, $f_{2}=bcdef$, $f_{3}=ac$, $f_{4}=eg$,
$f_{5}=fg$, $f_{6}=gh$, $f_{7}=hi$.  Set $\H = \H(I) = (V, X, E,  \E)$.  Then $\H$, pictured in Figure~\ref{F3}, has exactly 2 isolated simple edges: $E_a$ and $E_g$.  Therefore
\[\reg(R/I) = |X| - |V| + \sum_{\substack{F \in \E\\F \text{ simple}}} (|F| - 1) = 9 - 7 + (2 - 1) + (3 - 1) = 5.\]
\end{eg}

\begin{figure}[h]
\caption{$\H((ab,bcdef,ac,eg,fg,gh,hi))$}\label{F3}
\begin{tikzpicture}
\usetikzlibrary{patterns}
\draw  [shape=circle] (-1,1) circle (.1) ;
\draw  [shape=circle] (1,1) circle (.1) ;
\draw  [shape=circle] (1,-1) circle (.1) ;
\draw  [shape=circle] (2,0) circle (.1) ;
\draw  [shape=circle] (-1,-1) circle (.1) ;
\shade [shading=ball, ball color=black] (0,0) circle (.1) node [below] {$d$}; 
\shade [shading=ball, ball color=black] (3,0) circle (.1) node [below] {$i$}; 
\draw [line width=1pt  ] (-.93,.93)--(0,0)  node [pos=.5, above ] { $b$ }; 
\draw [line width=1pt  ] (-1,.9)--(-1,-.9)  node [pos=.5, left ] { $a$ }; 
\draw [line width=1pt  ] (-.93,-.93)--(0,0)  node [pos=.5, below ] { $c$ }; 
\draw [line width=1pt  ] (.93,.93)--(0,0)  node [pos=.5, above ] { $e$ }; 
\draw [line width=1pt  ] (2.1,0)--(3,0)  node [pos=.5, above ] { $h$ }; 
\draw [line width=1pt  ] (.93,-.93)--(0,0)  node [pos=.5, below ] { $f$ }; 
\path  (1.2,1)--(1.2,-1)  node [pos=.5, right ]{$g$};
\path [pattern=north west lines]  (1,1)--(1,-1)--(2,0)--cycle;
\end{tikzpicture}
\end{figure}
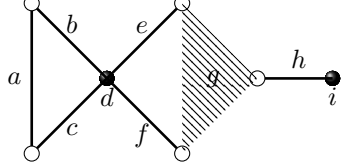

The results in this section seem to be incomparable to other known bounds on the regularity of square-free monomial ideals.  To our knowledge, the best combinatorial upper bounds on the regularity of a square-free monomial ideal with no other assumptions are the following results by Dao-Schweig and Ha-Woodroofe.

\begin{thm}[{\cite[Remark 6.4]{DS}}]\label{DS} If $\mathcal{C}$ is a clutter, write $\reg(\mathcal{C})$ to denote the (Castelnuovo-Mumford) regularity of $I(\mathcal{C})$.  Then
\[\reg(\mathcal{C}) \le |V(\mathcal{C})| - \epsilon(\mathcal{C}^\vee).\]
\end{thm}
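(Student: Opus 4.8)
The plan is to deduce this bound from two classical facts: Terai's duality, which converts the regularity of a squarefree monomial ideal into the projective dimension of its Alexander dual, and the Dao--Schweig projective-dimension bound, which controls $\pd$ of an edge ideal by the edgewise domination number $\epsilon$ of the clutter.

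First I would recall Terai's theorem: for a squarefree monomial ideal $I\subset R$ with Alexander dual $I^\vee$, one has $\reg(I)=\pd(R/I^\vee)$. Applying this to $I=I(\mathcal C)$, and using that the Alexander dual of an edge ideal is the cover ideal, i.e. $I(\mathcal C)^\vee=I(\mathcal C^\vee)$ where $\mathcal C^\vee$ is the blocker clutter (whose edges are the minimal vertex covers of $\mathcal C$), gives the \emph{equality}
\[\reg(\mathcal C)=\reg(I(\mathcal C))=\pd\!\left(R/I(\mathcal C^\vee)\right).\]
Since $\mathcal C$ and $\mathcal C^\vee$ have the same vertex set, it now suffices to prove, for an arbitrary clutter $\mathcal D$, the inequality $\pd(R/I(\mathcal D))\le |V(\mathcal D)|-\epsilon(\mathcal D)$, and then specialize $\mathcal D=\mathcal C^\vee$.

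By the Auslander--Buchsbaum formula, $\pd(R/I(\mathcal D))=|V(\mathcal D)|-\depth(R/I(\mathcal D))$, so the displayed inequality is equivalent to the depth bound $\depth(R/I(\mathcal D))\ge\epsilon(\mathcal D)$. I would prove this by induction, splitting along a well-chosen variable $x$ via the short exact sequence
\[0\to R/(I(\mathcal D):x)\xrightarrow{\,x\,}R/I(\mathcal D)\to R/(I(\mathcal D),x)\to 0\]
together with the depth lemma $\depth(R/I(\mathcal D))\ge\min\{\depth(R/(I(\mathcal D):x)),\ \depth(R/(I(\mathcal D),x))\}$: one picks $x$ in an edge of a minimum edgewise-dominant set and checks that $I(\mathcal D):x$ and $(I(\mathcal D),x)$ are again edge ideals of clutters (a contraction and a deletion, respectively) whose edgewise domination numbers have dropped by just enough that both inductive bounds still yield $\epsilon(\mathcal D)$. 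Alternatively, one can route the argument through Hochster's formula, $\pd(R/I_\Delta)=\max\{\,|W|-1-\ell : W\subseteq V,\ \tilde H_\ell(\Delta|_W;k)\ne 0\,\}$ with $\Delta=\operatorname{Ind}(\mathcal D)$, in which case the bound becomes the assertion that the independence complex of every induced subclutter is suitably highly connected, proved by a Mayer--Vietoris or discrete-Morse induction on $\epsilon$.

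I expect the main obstacle to be the combinatorial bookkeeping of the parameter $\epsilon$: one must establish precise monotonicity statements for how the edgewise domination number behaves under deleting a vertex, passing to $I:x$, and passing to $(I,x)$, calibrated so that the induction closes with exactly the constant $|V|-\epsilon$ rather than something weaker. A secondary point that must be handled with care is the dictionary at the start --- verifying $I(\mathcal C)^\vee=I(\mathcal C^\vee)$ and $|V(\mathcal C^\vee)|=|V(\mathcal C)|$ --- since this is precisely what allows the abstract clutter inequality, once proved for all clutters, to be applied to the dual and re-expressed as a bound on $\mathcal C$ involving $\epsilon(\mathcal C^\vee)$.
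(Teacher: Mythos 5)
First, a point of orientation: the paper does not prove this statement. Theorem~\ref{DS} is imported verbatim from \cite[Remark 6.4]{DS} and is used only as a benchmark against which Theorem~\ref{Pfill} is compared in Example~\ref{egcompare}. So there is no in-paper argument to set your proposal beside; what you have written is an attempt to reprove the Dao--Schweig result from scratch.

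Your opening reduction is correct, and it is indeed how Remark 6.4 is extracted from the main theorem of \cite{DS}: Terai's formula $\reg(I)=\pd(R/I^\vee)$, together with the identification $I(\mathcal{C})^\vee=I(\mathcal{C}^\vee)$ and $|V(\mathcal{C}^\vee)|=|V(\mathcal{C})|$, converts the claim into the assertion that $\pd(R/I(\mathcal{D}))\le |V(\mathcal{D})|-\epsilon(\mathcal{D})$ for an arbitrary clutter $\mathcal{D}$, equivalently $\depth(R/I(\mathcal{D}))\ge\epsilon(\mathcal{D})$. The genuine gap is that this last inequality --- which is the entire content of the theorem, not a technicality --- is never established. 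You offer two candidate strategies (a one-variable colon/deletion induction, or a Hochster-formula connectivity argument) without carrying either out, and you yourself flag the required monotonicity of $\epsilon$ under contraction and deletion as the ``main obstacle.'' Those monotonicity statements are not routine: after passing to $I(\mathcal{D}):x$ the generating set must be re-minimalized, edges can absorb one another, and an edgewise dominant set of the contraction need not restrict from, or lift to, one of $\mathcal{D}$ with controlled size; there is no a priori reason the induction closes with the exact constant $|V|-\epsilon$ rather than something weaker. Until you state and prove the precise lemmas governing how $\epsilon$ changes under these operations (or supply the connectivity bound for independence complexes in the Hochster version), the proposal is a correct reduction plus an honest to-do list rather than a proof.
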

Here $\mathcal{C}^\vee$ denotes the clutter of the Alexander dual of $I(\mathcal{C})$ and $\epsilon(-)$ denotes the edgewise domination parameter of a clutter.  (See \cite[Definition 3.1]{DS}.)

\begin{thm}[{\cite[Theorem 1.2]{HW}}]\label{HW}  Let $\mathcal{H}$ be a simple hypergraph with edge ideal $I \subseteq R$, and let $\{E_1,\ldots,E_c\}$ be a $2$-collage in $\H$.  Then
\[\reg(R/I) \le \sum_{i = 1}^c (|E_i| - 1).\]
\end{thm}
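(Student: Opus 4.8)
The plan is to prove the inequality by the inductive, short-exact-sequence method used throughout this paper: split repeatedly on a single variable $x_v$ and invoke Lemma~\ref{Lreg}(1) with $d=1$. Write $I=I(\H)$, let $q$ be the number of edges of $\H$, and for a $2$-collage $C=\{E_1,\dots,E_c\}$ set $s(C):=\sum_{i=1}^{c}(|E_i|-1)$. I would induct on the pair $(s(C),q)$ in lexicographic order, proving the statement for all hypergraphs and all $2$-collages. The base case $q\le1$ is trivial ($I$ is zero or principal of degree $|E_1|$). If $\H$ has a singleton edge $\{v\}$, one reduces: $x_v\in I$, so $R/I\cong S/I(\H\setminus v)$ where $S$ drops $x_v$ and $\H\setminus v$ deletes $v$ and every edge through it; $\H\setminus v$ has fewer edges, and one exhibits on it a $2$-collage of $s$-value $\le s(C)$, so induction applies. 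Hence we may assume $|E_i|\ge2$ for all $i$; fix $E_1$. (A little bookkeeping with redundant generators is needed at the localization step below; one either carries the induction over hypergraphs whose edge set need not be an antichain — harmless, since extra generators only weaken the bound — or prunes to minimal generators and re-routes the collage. This is routine and not the main point.)

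The crux is to choose a vertex $v\in E_1$ and produce two ``derived'' $2$-collages. \emph{Localization.} The ideal $I:x_v$ is generated by $\{x_{E\smallsetminus v}:v\in E\}\cup\{x_E:v\notin E\}$; set $C_v:=\{E_1\smallsetminus v\}\cup\{E_i:v\notin E_i,\ i\ge2\}\cup\{E_i\smallsetminus v:v\in E_i,\ i\ge2\}$. That $C_v$ is a $2$-collage of the hypergraph of $I:x_v$ is a direct verification: for an edge $f$ there, if $v\notin f$ and $f\smallsetminus u\subseteq E_j$ then $f\smallsetminus u\subseteq E_j\smallsetminus v$, so its dominator still lies in $C_v$; and if $v\in f$, trace $f$ back to $E=f\cup\{v\}\in\H$, which was $2$-dominated. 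Moreover $s(C_v)\le s(C)-1$, since $E_1$ is replaced by the strictly smaller $E_1\smallsetminus v$ while deleting $v$ from the other $E_i$ only lowers $s$. \emph{Deletion.} $\H\setminus v$ has strictly fewer edges than $\H$ and — this is where the real work is — for an appropriate choice of $v\in E_1$ admits a $2$-collage $C'$ with $s(C')\le s(C)$. Granting these, Lemma~\ref{Lreg}(1) gives
\[\reg(R/I)\ \le\ \max\bigl\{\reg(R/(I:x_v))+1,\ \reg(R/(I,x_v))\bigr\}.\]
Both $(s(C_v),\,\#\text{edges})$ (first coordinate strictly smaller) and $(s(C'),\,\#\text{edges})$ (first coordinate $\le$, second strictly smaller) precede $(s(C),q)$ lexicographically, so by the inductive hypothesis the first term is at most $(s(C)-1)+1=s(C)$, and the second — which equals $\reg(S/I(\H\setminus v))$ — is at most $s(C')\le s(C)$. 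This closes the induction.

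The main obstacle is the deletion step, i.e.\ the vertex selection. The obvious candidate $\{E_i:v\notin E_i\}$ need not be a $2$-collage of $\H\setminus v$: an edge of $\H$ avoiding $v$ may have been dominated only through a collage edge $E_j$ containing $v$, and $E_j\smallsetminus v$ is usually not an edge to take its place. In fact, for a careless choice of $v\in E_1$ the least $s$-value among $2$-collages of $\H\setminus v$ can genuinely \emph{exceed} $s(C)$ (and a bad $v$, such as one with $\{v\}\in\H$, can even make a branch degenerate), so the choice of $v$ is essential. One therefore has to prove that some $v\in E_1$ works — presumably after first replacing $C$ by an $s$-minimal $2$-collage and picking $v$ by an extremal rule (say a vertex of $E_1$ lying in the most edges of $\H$), together with a short combinatorial argument that builds $C'$ from $C$ by discarding $E_1$ and re-routing the domination of the orphaned edges within the budget $|E_1|-1$ that this frees. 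I expect essentially all of the difficulty to be concentrated in that lemma; the rest is the routine Lemma~\ref{Lreg} bookkeeping displayed above.
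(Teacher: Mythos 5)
This statement is not proved in the paper at all: Theorem~\ref{HW} is quoted verbatim from H\`a--Woodroofe \cite[Theorem 1.2]{HW} solely so that Example~\ref{egcompare} can compare it with Theorem~\ref{Pfill}. So there is no internal proof to measure your argument against, and your proposal has to stand on its own as a proof of the cited result. It does not. The colon branch of your induction is fine (the verification that $C_v$ is a $2$-collage of $\H(I:x_v)$ and that $s(C_v)\le s(C)-1$ goes through, modulo the minimality bookkeeping you mention), but the deletion branch --- the existence of some $v\in E_1$ for which $\H\setminus v$ admits a $2$-collage $C'$ with $s(C')\le s(C)$ --- is exactly where the entire content of the theorem lives, and you leave it as a conjecture (``presumably after \dots{} an extremal rule \dots{} I expect essentially all of the difficulty to be concentrated in that lemma''). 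A strategy outline with the decisive lemma unproved is not a proof.

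To see that this lemma cannot be waved through, note that it is genuinely false for an arbitrary $v\in E_1$. Take $\H$ with edges $E_1=\{v,1,2,3\}$, $F_1=\{1,2,a\}$, $F_2=\{2,3,b\}$, $F_3=\{1,3,c\}$. Then $C=\{E_1\}$ is a $2$-collage (each $F_i$ minus its private vertex lies in $E_1$) with $s(C)=3$, yet $\H\setminus v=\{F_1,F_2,F_3\}$ and one checks that no $F_i\smallsetminus u$ is contained in $F_j$ for $j\ne i$, so the only $2$-collage of $\H\setminus v$ is all of $\{F_1,F_2,F_3\}$, with $s$-value $6>3$. (Your heuristic of picking a vertex of $E_1$ in the most edges would avoid $v$ here, but that only confirms that the selection rule is doing all the work; it is not evidence that some rule always succeeds, and no proof is offered.) A secondary but real issue is the ``routine bookkeeping'': the inductive hypothesis concerns simple hypergraphs whose collage members are actual edges, and when $E_i\smallsetminus v$ fails to be a minimal generator of $I:x_v$ you cannot simply replace it by a minimal edge beneath it, since $F\smallsetminus u\subseteq E_i\smallsetminus v$ does not imply $F\smallsetminus u'\subseteq G$ for a proper subedge $G$; so the generalization to non-antichain edge sets has to be formulated and proved, not just asserted. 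Until the vertex-selection lemma is stated precisely and proved, the proposal should be regarded as a plan, not a proof.
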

See \cite{HW} for the definition of a $2$-collage.  

It is worth noting that the notions of \textit{simple hypergraph} in \cite{HW} and of a \textit{clutter} in \cite{DS} coincide with each other.  When referring to this object below, we will use the term clutter so as to distinguish it from the labeled hypergraphs we have been studying.

\begin{eg}\label{egcompare} Here we give two examples that show that the bound in Theorem~\ref{Pfill} is stronger than the two above results for some ideals and weaker for others.  

Consider the ideal $I = (abc, def, adg, beg)$.  The labeled hypergraph $\H(I)$ is pictured in Figure~\ref{f8}.

\begin{figure}[h]
\caption{$\H((abc, def, adg, beg))$}\label{f8}
\begin{tikzpicture}
\usetikzlibrary{patterns}
\draw  [shape=circle] (-1,1) circle (.1) ;
\draw  [shape=circle] (-1,-1) circle (.1) ;
\shade [shading=ball, ball color=black] (0,0) circle (.1) node [right] {$f$}; 
\shade [shading=ball, ball color=black] (-2,0) circle (.1) node [left] {$c$}; 
\draw [line width=1pt  ] (-.93,.93)--(0,0)  node [pos=.5, above ] { $e$ }; 
\draw [line width=1pt  ] (-1.07,.93)--(-2,0)  node [pos=.5, above ] { $b$ }; 
\draw [line width=1pt  ] (-1,.9)--(-1,-.9)  node [pos=.5, left ] { $g$ }; 
\draw [line width=1pt  ] (-.93,-.93)--(0,0)  node [pos=.5, below ] { $d$ }; 
\draw [line width=1pt  ] (-1.07,-.93)--(-2,0)  node [pos=.5, below ] { $a$ }; 
\end{tikzpicture}
\end{figure}
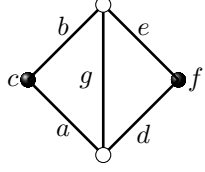

Since $\H(I)$ has only two open vertices, it follows from Theorem~\ref{Pfill} that 
\[\reg(R/I) \le |X| - |V| + 1 = 7 - 4 + 1 = 4.\]
(In fact, since this labeled hypergraph has one isolated simple edge of size 2, Theorem~\ref{Tsimple} shows that $\reg(R/I) = 4$.)
The clutter $\mathcal{C}$ associated to $I$ is pictured in Figure~\ref{f9}.

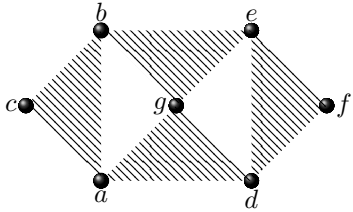
\begin{figure}[h]
\caption{$\mathcal{C}((abc, def, adg, beg))$}\label{f9}
\begin{tikzpicture}
\usetikzlibrary{patterns}
\draw  [shape=circle] (-1,1) circle (.1) ;
\draw  [shape=circle] (-1,-1) circle (.1) ;
\shade [shading=ball, ball color=black] (-1,1) circle (.1) node [above] {$b$}; 
\shade [shading=ball, ball color=black] (-1,-1) circle (.1) node [below] {$a$}; 
\shade [shading=ball, ball color=black] (1,-1) circle (.1) node [below] {$d$}; 
\shade [shading=ball, ball color=black] (1,1) circle (.1) node [above] {$e$}; 
\shade [shading=ball, ball color=black] (0,0) circle (.1) node [left] {$g$}; 
\shade [shading=ball, ball color=black] (-2,0) circle (.1) node [left] {$c$}; 
\shade [shading=ball, ball color=black] (2,0) circle (.1) node [right] {$f$}; 
\path [pattern=north west lines]  (1,1)--(1,-1)--(2,0)--cycle;
\path [pattern=north west lines]  (1,1)--(-1,1)--(0,0)--cycle;
\path [pattern=north west lines]  (1,-1)--(-1,-1)--(0,0)--cycle;
\path [pattern=north west lines]  (-1,1)--(-1,-1)--(-2,0)--cycle;
\end{tikzpicture}
\end{figure}

It is easy to check that the set of edges $\mathcal{E}(\mathcal{C})$ is itself a $2$-collage of $\mathcal{C}$.  So by Theorem~\ref{HW},
\[\reg(R/I) \le \sum_{E \in \mathcal{E}(\mathcal{C})} (|E| - 1) = 8.\]

The Alexander dual of $I$ is
\[I^\vee = (bd, ae, cde, abf, adg, cdg, beg, ceg, afg, bfg, cfg).\]
Since every variable appears in at least one generator with the variable $g$, every vertex in $\mathcal{C}^\vee$ is a neighbor of the vertex corresponding to $g$.  It follows that any one edge in $\mathcal{C}^\vee$ containing $g$ forms an edgewise dominant set and $\epsilon(\mathcal{C}^\vee) = 1$.  Hence, by Theorem~\ref{DS},
\[\reg(R/I) = \reg(I) - 1 = \reg(\mathcal{C}) - 1 \le |V(\mathcal{C}| - \epsilon(\mathcal{C}^\vee) - 1 = 7 - 1 - 1 = 5.\]
Hence the regularity bound from Theorem~\ref{Pfill} is sharper than both of these results in this instance.  However, one checks that if $I = (abc, abd, acd, bcd)$, then Theorem~\ref{Pfill} yields $\reg(R/I) \le 5$ while both Theorem~\ref{DS} and Theorem~\ref{HW} yield $\reg(R/I) \le 2$.  
\end{eg}

\section{One-Dimensional Hypergraphs}\label{S1d}

The following observation was made by Dao et.~al. in the proof of Lemma 2.10 in \cite{DHS}.  The argument is based on Lemma 2.2 in \cite{Kummini}.

\begin{lem}\label{Ldhs} Let $I \subset R$ be a square-free monomial ideal and let $x$ be a variable appearing in the minimal generators of $I$.  Then $\reg(R/(I,x)) \le \reg(R/I)$.
\end{lem}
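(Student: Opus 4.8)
The plan is to set up a short exact sequence that relates $R/(I,x)$, $R/I$, and $R/(I:x)$, and then to exploit the special structure that $x$ is a variable (so $(I:x)$ interacts with $x$ very cleanly) together with the regularity inequalities in Lemma~\ref{Lreg}. First I would write, taking $z = x$ and $d = 1$ in the exact sequence from the proof of Lemma~\ref{Lreg},
\[0 \to \frac{R}{I:x}(-1) \xto{x} \frac{R}{I} \to \frac{R}{(I,x)} \to 0.\]
By part (3) of Lemma~\ref{Lreg} we immediately get
\[\reg\left(\frac{R}{(I,x)}\right) \le \max\left\{\reg\left(\frac{R}{I}\right),\ \reg\left(\frac{R}{I:x}\right)\right\},\]
so it suffices to show $\reg(R/(I:x)) \le \reg(R/I)$.

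The key point is that since $I$ is a square-free monomial ideal and $x$ is a single variable, the colon ideal $I:x$ is again a square-free monomial ideal, and moreover $x$ does \emph{not} appear among the minimal generators of $I:x$: if a minimal generator $f$ of $I$ is divisible by $x$, then $f/x$ is the corresponding generator of $I:x$, and no generator of $I:x$ can be divisible by $x$. Hence $(I:x, x) = (I:x) + (x)$ where $x$ is a nonzerodivisor on $R/(I:x)$ modulo... more precisely, $x$ does not divide any generator of $I:x$, so $R/((I:x),x) \cong (R/(x))/\overline{I:x}$, and regularity is unaffected by adjoining a fresh variable; thus $\reg(R/((I:x),x)) = \reg(R/(I:x))$. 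Now apply part (2) of Lemma~\ref{Lreg} to the pair $(I:x, x)$ with $d = 1$:
\[\reg\left(\frac{R}{(I:x):x}\right) \le \max\left\{\reg\left(\frac{R}{I:x}\right) - 1,\ \reg\left(\frac{R}{((I:x),x)}\right)\right\}.\]
But $(I:x):x = I:x^2 = I:x$ since $I$ is square-free, and $\reg(R/((I:x),x)) = \reg(R/(I:x))$, so this inequality is vacuous — I need a better route.

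Instead, the cleaner argument: because $x$ does not divide any minimal generator of $J := I:x$, we have a direct sum type decomposition $R/(J,x) \cong (R/J) \otimes_K K[\A]/(x) $ at the level of minimal free resolutions — tensoring the minimal resolution of $R/J$ over $K[\A\setminus\{x\}]$ with the Koszul complex on $x$ — which shows $\reg(R/(J,x)) = \reg(R/J)$ and also that $x$ is a nonzerodivisor on $R/J$. Then the exact sequence $0 \to (R/J)(-1) \xto{x} R/J \to R/(J,x) \to 0$ degenerates, but more to the point: since $I = I:x$ would give the trivial case, assume not; then the generators of $I$ divisible by $x$, divided by $x$, are among the generators of $J$, and every monomial in $I$ is divisible by some generator of $J$ times $x$ or by a generator of $I$ not involving $x$ — I would instead invoke the short exact sequence $0 \to (R/J)(-1) \xto{x} R/I \to R/(I,x) \to 0$ directly (this is the one above, since $I:x = J$) and observe that multiplication by $x$ in it is injective with cokernel $R/(I,x)$; applying Lemma~\ref{Lreg}(3) to $I$ and $x$ gives $\reg(R/(I,x)) \le \max\{\reg(R/I), \reg(R/J)\}$ as above, and then I bound $\reg(R/J) = \reg(R/(I:x))$ by $\reg(R/I)$ using Lemma~\ref{Lreg}(2) applied to the pair $(I,x)$: with $d = 1$,
\[\reg\left(\frac{R}{I:x}\right) \le \max\left\{\reg\left(\frac{R}{I}\right) - 1,\ \reg\left(\frac{R}{(I,x)}\right)\right\}.\]

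So the two displayed inequalities read $\reg(R/(I,x)) \le \max\{\reg(R/I), \reg(R/(I:x))\}$ and $\reg(R/(I:x)) \le \max\{\reg(R/I) - 1, \reg(R/(I,x))\}$. Combining them, if $\reg(R/(I:x)) > \reg(R/I)$ then the first gives $\reg(R/(I,x)) \le \reg(R/(I:x))$ and the second gives $\reg(R/(I:x)) \le \reg(R/(I,x))$, so $\reg(R/(I,x)) = \reg(R/(I:x))$; but I then need one extra input to derive a contradiction or to close the loop — and this is where I expect the main obstacle to lie. The resolution is to use the fact (from the resolution-tensor-Koszul observation, or equivalently Lemma~\ref{Lreg}(1) applied to the pair $(J, x)$ noting $J:x = J$) that $\reg(R/(J,x))$ cannot exceed $\reg(R/J)$ strictly in a way that feeds back: applying Lemma~\ref{Lreg}(1) to $I$ and $x$ gives $\reg(R/I) \le \max\{\reg(R/(I:x)) + 1, \reg(R/(I,x))\}$, which is the wrong direction. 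The genuinely clean finish, which I would adopt, is the Kummini-style argument referenced in the paper: since $x$ does not divide any generator of $J = I:x$, the variable $x$ is a nonzerodivisor on $R/J$, hence $\reg(R/(J,x)) = \reg(R/J)$; and separately one shows $\reg(R/(I,x)) \le \reg(R/I)$ by comparing the Taylor complexes / using that adding the generator $x$ and then passing to $(I,x)$ only shrinks lcm-degrees relative to generator count — I would carry this out by the polarization-free direct argument that $\beta_{i,j}(R/(I,x)) \neq 0$ forces $\beta_{i,j}(R/I) \neq 0$ or $\beta_{i-1,j-1}(R/I) \neq 0$ with $j - i \le \reg(R/I)$, extracted from the long exact sequence in $\tor$ associated to the short exact sequence above. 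The main obstacle is thus purely bookkeeping in that $\tor$ sequence: tracking internal degrees to confirm no new large $j - i$ appears, which follows since the connecting maps shift degree by exactly $1$ and $\reg(R/(I:x)) \le \reg(R/I)$ holds by induction on the number of variables (or by the same $\tor$ argument applied to the strictly smaller ideal $I:x$ in one fewer variable).
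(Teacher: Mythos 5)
The paper offers no proof of this lemma at all---it simply cites Lemma 2.10 of \cite{DHS} and Lemma 2.2 of \cite{Kummini}---so the real question is whether your argument stands on its own, and it does not. You correctly diagnose that the colon/sum exact sequence $0 \to (R/(I:x))(-1) \xto{x} R/I \to R/(I,x) \to 0$ together with Lemma~\ref{Lreg} yields only the mutually circular pair $\reg(R/(I,x)) \le \max\{\reg(R/I), \reg(R/(I:x))\}$ and $\reg(R/(I:x)) \le \max\{\reg(R/I)-1, \reg(R/(I,x))\}$. But both of your proposed escapes fail. The implication you need, that $\beta_{i,j}(R/(I,x)) \neq 0$ forces $\beta_{i,j}(R/I) \neq 0$ or $\beta_{i-1,j-1}(R/I) \neq 0$, cannot be ``extracted from the long exact sequence in $\tor$'': that sequence reads $\tor_i(R/I,K)_j \to \tor_i(R/(I,x),K)_j \to \tor_{i-1}(R/(I:x),K)_{j-1}$, so its second branch lands on a Betti number of $R/(I:x)$, which is precisely the quantity you have not bounded; the long exact sequence contains no more information than Lemma~\ref{Lreg} itself. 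And the ``induction on the number of variables'' is never actually set up: the statement compares the regularities of two \emph{different} ideals, and neither $I:x$ nor $(I,x)$ produces a smaller instance of that same two-ideal comparison, so there is no inductive step to run. Your intermediate reduction to $\reg(R/(I:x)) \le \reg(R/I)$ is also not progress---that inequality is true for monomial ideals, but it is a nontrivial theorem in its own right and is not derivable from Lemma~\ref{Lreg}.

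The missing ingredient is Hochster's formula, which is the actual content of Kummini's Lemma 2.2. Write $I = I_\Delta$ for a simplicial complex $\Delta$ on the vertex set $V$ of variables. Then $(I,x)$ is the Stanley--Reisner ideal (in the same ring $R$) of the induced subcomplex $\Delta|_{V\smallsetminus\{x\}}$, and Hochster's formula gives $\beta_{i,j}(R/I_\Delta) = \sum_{W\subseteq V,\,|W|=j} \dim_K \tilde{H}_{j-i-1}(\Delta|_W;K)$. Every induced subcomplex of $\Delta|_{V\smallsetminus\{x\}}$ is an induced subcomplex of $\Delta$: a nonzero summand for $(I,x)$ indexed by $W$ with $x\notin W$ reappears in $\beta_{i,j}(R/I)$, while one indexed by $W\ni x$ equals a summand of $\beta_{i-1,j-1}(R/I)$ (since $\Delta|_{V\smallsetminus\{x\}}$ restricted to $W$ equals $\Delta$ restricted to $W\smallsetminus\{x\}$). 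Either way $j-i \le \reg(R/I)$, which is the lemma. This restriction-of-complexes argument is genuinely different from, and not replaceable by, any manipulation of the short exact sequence; without it (or an equivalent, such as upper semicontinuity of Betti numbers under restriction), your write-up does not close.
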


We can now give a lower bound on regularity for certain square-free monomial ideals with one-dimensional hypergraphs.

\begin{prop}\label{Pmatch} 
Let $I \subset R$ be a square-free monomial ideal.  Let $\H = \H(I) = (V,X,E,\E)$.  Suppose that $\dim(\H) = 1$ and that there exist closed vertices $c_1,\ldots,c_t$ such that
\begin{enumerate}
\item For all $1 \le i, j \le t$, $c_i \not\in N_\H(c_j)$.
\item For every open vertex $v \in V$, there exists $1 \le i \le t$ such that $c_i \in N_\H(v)$.
\item For all $1 \le i \le t$, there is a unique $a_i \in X$ with $E_{a_i} = \{c_i\}$.
\end{enumerate}
Then
\[\reg(R/I) \ge |X| - |V|.\]
\end{prop}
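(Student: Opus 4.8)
The plan is to induct on $t$, at each step peeling off one distinguished closed vertex together with its private variable; the only regularity tool I expect to need is the monotonicity statement Lemma~\ref{Ldhs}, together with the equality case supplied by Proposition~\ref{Psat}. In particular I do not anticipate needing the finer estimates of Lemma~\ref{Lreg} or Corollary~\ref{Creg}.

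\emph{Base case.} If $\H$ is saturated, Proposition~\ref{Psat} gives $\reg(R/I) = |X| - |V|$, so the inequality holds (with equality). This covers $t = 0$, since condition (2) then forces $\H$ to have no open vertices. Hence I may assume $\H$ is not saturated; fixing an open vertex, condition (2) forces $t \ge 1$. Now fix one distinguished closed vertex $c := c_t$ with its private variable $a := a_t$, so by condition (3) the variable $a$ divides $f_c$ and no other generator. Set $J := (f_v : v \in V,\ v \ne c)$, an ideal of $R' := K[\A \smallsetminus \{a\}]$. Since $a$ divides $f_c$ alone, $(I,a) = (a) + J$, so $\reg_R(R/(I,a)) = \reg_{R'}(R'/J)$; and since $a$ occurs in the minimal generator $f_c$ of $I$, Lemma~\ref{Ldhs} gives $\reg_R(R/(I,a)) \le \reg(R/I)$. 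Therefore
\[
\reg(R/I) \;\ge\; \reg_R(R/(I,a)) \;=\; \reg_{R'}(R'/J),
\]
and it remains to show $\reg_{R'}(R'/J) \ge |X| - |V|$.

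\emph{Reducing $J$ to the inductive hypothesis.} First the bookkeeping: by condition (3), $a$ is the \emph{unique} label whose edge is $\{c\}$, so $a$ is the only variable of $R$ not occurring among the $f_v$ with $v \ne c$, and those generators remain a minimal generating set of $J$; hence $\H(J)$ has $|X|-1$ labels and $|V|-1$ vertices and the quantity $|X|-|V|$ is unchanged. Next I would check that $\H(J)$, with distinguished closed vertices $c_1,\dots,c_{t-1}$, again satisfies the hypotheses of the proposition. We have $\dim\H(J)\le 1$ automatically, and no vertex of $\H$ becomes open in $\H(J)$. Conditions (1) and (3) are inherited, since the only new edges of $\H(J)$ arise from edges $\{c,w\}$ of $\H$ by deleting $c$, and condition (1) rules out any label of $\H$ whose edge is $\{c_i,c\}$ (so no new label can acquire edge $\{c_i\}$). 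Condition (2) is where $\dim\H = 1$ enters: an open vertex $v$ of $\H(J)$ is open in $\H$, and if its only neighbour in $\H$ among $c_1,\dots,c_t$ were $c$ itself, then the size‑$2$ edge $\{v,c\}$ of $\H$ would shrink to $\{v\}$ in $\H(J)$, making $v$ closed there, a contradiction; hence $v$ is still adjacent to some $c_i$ with $i \le t-1$. Applying the inductive hypothesis to $J$ (or Proposition~\ref{Psat}, should $\H(J)$ have become saturated) now gives $\reg_{R'}(R'/J) \ge (|X|-1)-(|V|-1) = |X|-|V|$, completing the induction.

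The only step with genuine content is the verification of condition (2) for $\H(J)$, and this is exactly where one‑dimensionality is essential: it is what makes ``$v$ is dominated by $c$'' equivalent to ``$v$ becomes closed once $c$ is deleted.'' The remaining hypotheses are bookkeeping — condition (3) guarantees that exactly one label is lost each time a distinguished vertex is peeled (so $|X|-|V|$ stays constant), and condition (1) keeps the private labels of the surviving $c_i$ private. I expect no technical obstacle beyond tracking the hypergraph data carefully through the deletion.
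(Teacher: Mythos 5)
Your argument is correct and is essentially the paper's proof: both adjoin the private variables $a_i$ of the distinguished closed vertices, invoke Lemma~\ref{Ldhs} for monotonicity of regularity, use $\dim(\H)=1$ to see that each open vertex becomes closed once an adjacent $c_i$ is processed, and finish with Proposition~\ref{Psat}. The only cosmetic difference is that you peel off one $c_i$ at a time (discarding the generator and the variable, so $|V|$ and $|X|$ each drop by one), whereas the paper adds all the $a_i$ at once and keeps $|V|$ and $|X|$ fixed by letting $a_i$ replace $f_{c_i}$ as a generator.
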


\begin{proof} For $i = 1,\ldots,n$, set $I_i = I + (a_1,\ldots,a_i)$.  By Lemma~\ref{Ldhs},
\[\reg(R/I) \ge \reg(R/I_1) \ge \reg(R/I_2) \ge \cdots \ge \reg(R/I_n).\]
Set $\H_i = \H(I_i)$.  Note that at each step, say going from $I_i$ to $I_{i+1}$, we are removing one of the minimal generators of $I_i$ and replacing it by $a_i$.  No other generators are changed and all remain minimal.  Since $a_i$ was the unique variable with $E_{a_i} = \{c_i\}$, $I_i$ and $I_{i+1}$ have the same number of edges.  Since none of the closed vertices $c_1,\ldots,c_t$ were neighbors, we retain the fact that each of our chosen closed vertices has a unique label.  Hence the the number of vertices and number of variables in $\H_i$ is constant for all $i$.  

Now consider one of the open vertices $v \in V$ of $\H$.  By assumption, there was a closed vertex $c_i \in N_\H(v)$.  Hence there is an edge $F \in \E$, necessarily of size $2$, containing $v$ and $c_i$.  So $F = \{v, c_i\}$.  Let $\L = \{a \in X : E_a = F\}$.  In $\H_n = (V^{(n)},X^{(n)},E^{(n)},\E^{(n)})$, each of the elements $a \in \L$ satisfies $E^{(n)}_a = \{v\}$.  Hence there are no open vertices and $\H_n$ is a saturated hypergraph.  The result follows from Proposition~\ref{Psat}.
\end{proof}

If in addition, we assume all open vertices are isolated, we have the following formula for regularity.

\begin{cor}\label{Tmatch} Let $I \subset R = K[\A]$ be a square-free monomial ideal.  Let $\H = \H(I) = (V,X,E,\E)$.  Suppose that $\dim(\H) = 1$ and that there exist closed vertices $c_1,\ldots,c_t$ such that
\begin{enumerate}
\item For all $1 \le i< j \le t$, $c_i \not\in N_\H(c_j)$.
\item For every open vertex $v$, there exists $1 \le i \le t$ such that $c_i \in N_\H(v)$.
\item For all $1 \le i \le t$, there is a unique $a \in X$ with $E_a = \{c_i\}$.
\item All open vertices are isolated.
\end{enumerate}
Then
\[\reg(R/I) = |X| - |V|.\]
\end{cor}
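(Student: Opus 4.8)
The plan is to establish the two inequalities $\reg(R/I) \ge |X| - |V|$ and $\reg(R/I) \le |X| - |V|$ separately, and then combine them to get equality.

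For the lower bound I would simply invoke Proposition~\ref{Pmatch}. Hypotheses (1), (2), (3) of the corollary are exactly hypotheses (1), (2), (3) of that proposition; the only apparent discrepancy is that Proposition~\ref{Pmatch} states condition (1) for all pairs $1 \le i, j \le t$ while the corollary states it only for $1 \le i < j \le t$, but since $v \notin N_\H(v)$ by the very definition of the neighbor set, the case $i = j$ holds vacuously, so the two formulations coincide. Together with the standing assumption $\dim(\H) = 1$, Proposition~\ref{Pmatch} immediately yields $\reg(R/I) \ge |X| - |V|$.

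For the upper bound I would use the extra hypothesis (4). The statement that every open vertex of $\H$ is isolated means precisely that $\H = \H(I)$ has only isolated open vertices in the sense of Section~\ref{Slh}. Hence Theorem~\ref{Tiso} applies directly and gives $\reg(R/I) \le |X| - |V|$. (Equivalently, one could appeal to Theorem~\ref{Pfill} with $t = 0$.) Combining this with the lower bound from the previous paragraph forces $\reg(R/I) = |X| - |V|$, as claimed.

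I do not anticipate a genuine obstacle here: the corollary is essentially the confluence of Proposition~\ref{Pmatch} and Theorem~\ref{Tiso}. The only points that require a moment's care are the harmless reconciliation of the two versions of condition (1) noted above, and the observation that hypothesis (4) is exactly the hypothesis needed to invoke Theorem~\ref{Tiso}; once these are in place the argument is a one-line synthesis of earlier results.
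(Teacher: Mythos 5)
Your proposal is correct and matches the paper's own proof, which likewise combines the lower bound from Proposition~\ref{Pmatch} with the upper bound from Theorem~\ref{Tiso}. The extra remarks about reconciling the two phrasings of condition (1) are harmless and accurate.
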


\begin{proof} This follows directly from Proposition~\ref{Pmatch} and Theorem~\ref{Tiso}.
\end{proof}

\begin{eg} Let $R = K[a,b,\ldots,y,z]$ and let $I = (f_1,f_2,f_3,f_4,f_5,f_6)$ with $f_1 = aef, f_2 = bgh, f_3 = ei, f_4 = hk, f_5 = cgij, f_6 = dfjk$.  Set $\H = \H(I) = (V,X,E,\E)$, which is pictured in Figure~\ref{F7}.  Then we may take closed vertices $c_1 = 1$ and $c_2 = 2$, which are not neighbors of each other.  Moreover, every open vertex is isolated and is a neighbor of $1$ or $2$.  Applying Theorem~\ref{Tmatch} yields $\reg(R/I) = |X| - |V| = 11 - 6 = 5$.
\end{eg}

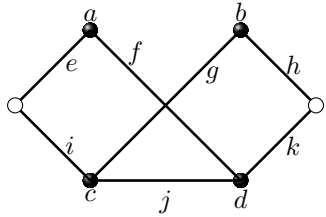
\begin{figure}[h]
\caption{$\H((aef,bgh,ei,hk,cgij,dfjk))$}\label{F7}
\begin{tikzpicture}
\usetikzlibrary{patterns}

\draw  [shape=circle] (2,0) circle (.1) ;
\draw  [shape=circle] (-2,0) circle (.1) ;
\shade [shading=ball, ball color=black] (-1,1) circle (.1)  node [above ]  { $a$};
\shade [shading=ball, ball color=black] (-1,-1) circle (.1) node [below] {$c$}; ;
\shade [shading=ball, ball color=black] (1,1) circle (.1) node [above ]  { $b$};
\shade [shading=ball, ball color=black] (1,-1) circle (.1) node [below ] { $d$};
\draw [line width=1pt  ] (-1,-1)--(1,-1)  node [pos=.5, below ] { $j$ } ;
\draw [line width=1pt  ] (1,1)--(-1,-1)  node [pos=.3, right ] { $g$ } ;
\draw [line width=1pt  ] (-1,1)--(1,-1)  node [pos=.3, above ] { $f$ } ;
\draw [line width=1pt  ] (1,1)--(1.93,.07)  node [pos=.5, right ] { $h$ } ;
\draw [line width=1pt  ] (1.93,-.07)--(1,-1)  node [pos=.5, right ] { $k$ } ;
\draw [line width=1pt  ] (-1.93,.07)--(-1,1)  node [pos=.5, right ] { $e$ } ;
\draw [line width=1pt  ] (-1.93,-.07)--(-1,-1)  node [pos=.5, right ] { $i$ } ;
\end{tikzpicture}
\end{figure}

It appears to be difficult to find a similar formula for even all one dimension hypergraphs that have no cycles.  The following example illustrates one of the difficulties one must deal with to extend the previous theorem.  

\begin{eg} Consider the ideals 
\[I = ideal(ab,bc,cde,ef,fghi,ij,jklm,mn,no) \text{ and  } J = (ab,bc,cdef,fg,ghi,ij,jklm,mn,no).\]
Both $\H(I)$ and $\H(J)$, pictured in Figure~\ref{F8}, are one-dimensional hypergraphs with the same number of edges and vertices.  Both $I$ and $J$ have the same number of generators in each degree.  It follows from Corollary~\ref{Tmatch} that $\reg(R/J) = |X| - |V| = 15 - 9 = 6$; however, Corollary~\ref{Tmatch} does not apply to $I$.  In fact, $\reg(R/I) = 5$.  
\end{eg}

\begin{figure}[h]
\caption{Two Similar Hypergraphs}\label{F8}
\begin{tikzpicture}
\path  (-1,2.5)--(1,2.5)  node [pos=.5, above ]{$\mathcal{H}(I)$};
\draw  [shape=circle] (-2.2,2) circle (.1) ;
\draw  [shape=circle] (-0.75,2) circle (.1) ;
\draw  [shape=circle] (0.75,2) circle (.1) ;
\draw  [shape=circle] (2.2,2) circle (.1) ;
\shade [shading=ball, ball color=black] (-3,0) circle (.1)  node [below ]  { $a$};
\shade [shading=ball, ball color=black] (-1.5,0) circle (.1) node [below] {$d$}; ;
\shade [shading=ball, ball color=black] (0,0) circle (.1) node [below ]  { $g,h$};
\shade [shading=ball, ball color=black] (1.5,0) circle (.1) node [below ] { $k,l$};
\shade [shading=ball, ball color=black] (3,0) circle (.1) node [below ] { $o$};
\draw [line width=1pt  ] (-3,0)--(-2.27,1.93)  node [pos=.5, left ] { $b$ }; 
\draw [line width=1pt  ] (-2.13,1.93)--(-1.5,0)  node [pos=.5, left ] { $c$ } ;
\draw [line width=1pt  ] (-1.5,0)--(-0.82,1.93)  node [pos=.5, left ] { $e$ } ;
\draw [line width=1pt  ] (-0.68,1.93)--(0,0)  node [pos=.5, left ] { $f$ }; 
\draw [line width=1pt  ] (0,0)--(0.68,1.93)  node [pos=.5, left ] { $i$ } ;
\draw [line width=1pt  ] (0.82,1.93)--(1.5,0)  node [pos=.5, left ] { $j$ } ;
\draw [line width=1pt  ] (1.5,0)--(2.13,1.93)  node [pos=.5, left ] { $m$ }; 
\draw [line width=1pt  ] (2.27,1.93)--(3,0)  node [pos=.5, left ] { $n$ } ;
\path  (7,2.5)--(9,2.5)  node [pos=.5, above ]{$\mathcal{H}(J)$};
\draw  [shape=circle] (5.8,2) circle (.1) ;
\draw  [shape=circle] (7.25,2) circle (.1) ;
\draw  [shape=circle] (8.75,2) circle (.1) ;
\draw  [shape=circle] (10.2,2) circle (.1) ;
\shade [shading=ball, ball color=black] (5,0) circle (.1)  node [below ]  { $a$};
\shade [shading=ball, ball color=black] (6.5,0) circle (.1) node [below] {$d,e$}; ;
\shade [shading=ball, ball color=black] (8,0) circle (.1) node [below ]  { $h$};
\shade [shading=ball, ball color=black] (9.5,0) circle (.1) node [below ] { $k,l$};
\shade [shading=ball, ball color=black] (11,0) circle (.1) node [below ] { $o$};
\draw [line width=1pt  ] (5,0)--(5.73,1.93)  node [pos=.5, left ] { $b$ }; 
\draw [line width=1pt  ] (5.87,1.93)--(6.5,0)  node [pos=.5, left ] { $c$ } ;
\draw [line width=1pt  ] (6.5,0)--(7.18,1.93)  node [pos=.5, left ] { $f$ } ;
\draw [line width=1pt  ] (7.32,1.93)--(8,0)  node [pos=.5, left ] { $g$ }; 
\draw [line width=1pt  ] (8,0)--(8.68,1.93)  node [pos=.5, left ] { $i$ } ;
\draw [line width=1pt  ] (8.82,1.93)--(9.5,0)  node [pos=.5, left ] { $j$ } ;
\draw [line width=1pt  ] (9.5,0)--(10.13,1.93)  node [pos=.5, left ] { $m$ }; 
\draw [line width=1pt  ] (10.27,1.93)--(11,0)  node [pos=.5, left ] { $n$ } ;
\end{tikzpicture}
\end{figure}
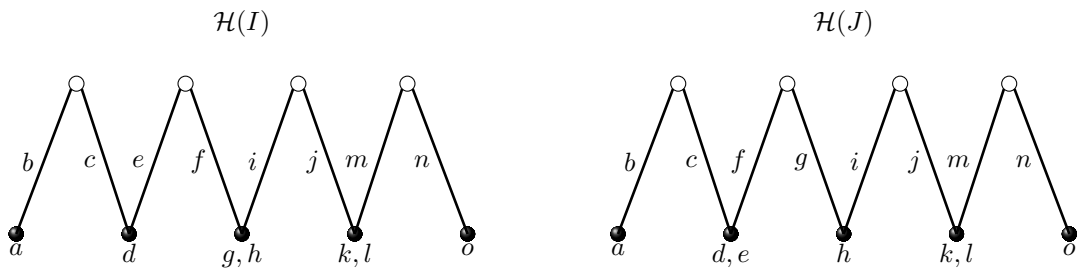

\section*{Acknowledgements} Part of this paper was completed while both authors were at MSRI and we thank personnel there for their support.  We also thank Alexandra Seceleanu who read an earlier draft of this paper.  Many of the computations in this paper were done with Macaulay2 \cite{m2}

\bibliographystyle{amsplain}
\bibliography{bib}

\end{document}